\DeclareMathOperator{\frob}{Fr}
\DeclareMathOperator{\tr}{Tr}
\newcommand{\lam}{\lambda}
\newcommand{\Lam}{\Lambda}
\newcommand{\Om}{\Omega}
\newcommand{\om}{\omega}
\newcommand{\Sig}{\Sigma}
\newcommand{\vphi}{\varphi}
\newcommand{\Cal}[1]{\mathcal{#1}}
\newcommand{\bb}[1]{\mathbb{#1}}
\newcommand{\mfrak}[1]{\mathfrak{#1}}
\newcommand{\wbar}[1]{\overline{#1}}
\newcommand{\rcomp}{\backslash}
\newcommand{\norm}[1]{\left\|\, #1 \,\right\|}
\newcommand{\ang}[1]{\langle #1 \rangle}
\newcommand{\paren}[1]{\left( #1 \right)}
\newcommand{\abs}[1]{\left| #1 \right|}
\newcommand{\End}{\textup{End}}
\newcommand{\Gal}{\textup{Gal}}
\newcommand{\Irr}{\textup{Irr}}
\newcommand{\sep}{\textup{sep}}
\newcommand{\spec}{\textup{Spec}\,}
\newcommand{\SL}[2]{\textup{SL}_{#1}(#2)}
\newcommand{\GL}[2]{\textup{GL}_{#1}(#2)}
\newcommand{\Sp}[2]{\textup{Sp}_{#1}(#2)}
\newcommand{\GSp}[2]{\textup{GSp}_{#1}(#2)}
\newcommand{\On}[3]{\textup{O}_{#1}^{#2}(#3)}
\newtheorem{thm}{Theorem}[section]
\newtheorem{lma}[thm]{Lemma}
\newtheorem{cor}[thm]{Corollary}
\newtheorem{prty}[thm]{Property}
\title[Principally polarized abelian surfaces]
      {Principally polarized abelian surfaces with surjective Galois representations on $l$-torsion}
\author{Erik Wallace}
\begin{document}
\maketitle

\begin{abstract}
 Given a rational variety $V$ defined over $K$, we consider a principally polarized abelian variety $A$
 of dimension $g$ defined over the function field $K(V)$. For each prime $l$ we then consider the Galois
 representation on the $l$-torsion of $A_t$, where $t$ is a $K$-rational point of $V$. The largest possible image is
 $\GSp{2g}{l}$, and in the cases $g=1$ and 2 we are able to attain this image for all
 $l$ and almost all $t$. In the case $g=1$ this recovers a theorem originally proven by William Duke \cite{Duke}. 
\end{abstract}

\section{Introduction}
A well known result of Duke \cite{Duke} states that if $\Cal{C}(X)$ denotes the set of equivalence classes elliptic curves over $\bb{Q}$ of the form $y^2=x^3+rx+s$
such that $\max\{|r|^3,|s|^2\}\leq X^6$, and $\Cal{E}(X)$ denotes the subset for which the representation of $\Gal(\wbar{\bb{Q}}/\bb{Q})$ on the
$l$-torsion of a representative elliptic curve is not surjective for some $l$, then
\[
 \lim_{X\to \infty} \frac{|\Cal{E}(X)|}{|\Cal{C}(X)|}=0.
\]
Zywina \cite{Zyw1} has proven a generalization to arbitrary number fields, however, so far there have been no extensions
to higher dimensional abelian varieties. In this paper we prove a generalization to abelian surfaces.

Zywina's result is accomplished with a version of the large sieve proven by Serre \cite{Ser2}, which is applied for each $l$,
and each conjugacy class in $\GL{2}{\bb{F}_l}$. Another interesting observation of Zywina is that the Siegel-Walfisz theorem
is not actually necessary.  However, Zywina does not consider elliptic curves over the function field of a curve, and so he is
forced to use a result of Jones \cite{Jon2} that requires the Eichler-Selberg trace formula. We avoid this by using a version
of the Chebotarev density theorem for function fields first proven by Lang \cite{Lang1}, which we sharpen slightly.

In section \ref{sec1} we begin discussing a version of the large sieve compatible with heights, as well as an application of a Brun
type sieve to obtain a lower bound for abelian varieties with good reduction.  This result on good reduction was required in an earlier
version of this paper, but it is still included as an interesting result in its own right. A large portion of our results are not
restricted by dimension in any way, and can possibly be applied to cases other than abelian varieties.  So in section \ref{sec2} we
discuss the results that can be proven in a very general way, the main result being Theorem \ref{general-thm}. This theorem is applied
in section \ref{sec4} to abelian varieties of dimension 1 and 2.  The result can be stated as follows:

\begin{thm}
Let $K$ be a number field, let $V$ be smooth geometrically irreducible affine variety over $K$ of dimension $r=\binom{g+1}{2}$,
birationally equivalent to $\bb{P}_K^r$ via the rational map $\vphi:V\to\bb{P}_K^r$.  Let $K(V)$ be the function field of $V$,
and let $A$ be a principally polarized abelian variety over $K(V)$ of dimension $g=1$ or 2.  If we define
\begin{align*}
 B_K(x)&=B_K(x)=\{t\in V(K):t\in U\text{ for some }\ang{U,\vphi_U},\text{ and }H_\vphi(t)\leq x \}\text{ and,}\\
 E_K(x)&=\{t\in B_K(x): \rho_{l,t}(G_K)\subsetneq \GSp{2g}{l}\text{ for some $l$}\},
\end{align*}
then
\[
 \frac{|E_K(x)|}{|B_K(x)|}\ll_{K,r,\vphi} \frac{(\log x)^{2(g^2+g+2)\gamma+1}}{x^\frac{1}{2}}.
\]
\end{thm}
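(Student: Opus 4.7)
The plan is to apply Theorem \ref{general-thm} of Section \ref{sec2} to the family of principally polarized abelian varieties carried by $V$; that theorem already packages together the large-sieve machinery of Section \ref{sec1} and the effective Chebotarev density theorem for function fields. What remains is to verify its hypotheses for $g\leq 2$ and to bookkeep the relevant group-theoretic quantities, namely the number of maximal subgroups of $\GSp{2g}{\bb{F}_l}$ and their minimal indices.

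First I would establish the required big-monodromy statement: for almost all primes $l$ one has $\rho_l(G_{K(V)})=\GSp{2g}{\bb{F}_l}$. Since $r=\binom{g+1}{2}=\dim \Cal{A}_g$ and $V$ is birational to $\bb{P}_K^r$, the datum of $A/K(V)$ factors through an open subvariety of the Siegel moduli space, so it suffices to exhibit full symplectic monodromy for the universal family. For $g=1$ this is classical (Igusa); for $g=2$ one combines the surjectivity of the topological monodromy onto $\Sp{4}{\bb{Z}}$ with the surjection $\Sp{4}{\bb{Z}}\to\Sp{4}{\bb{F}_l}$ for every prime $l$. This yields a finite exceptional set $S$ of primes outside of which Theorem \ref{general-thm} can be fed with the input ``$\rho_l$ is generically surjective''.

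Next I would run the sieve. For each $l\notin S$ and each proper maximal subgroup $M\subsetneq \GSp{2g}{\bb{F}_l}$, the sharpened Lang--Chebotarev estimate from Section \ref{sec1} bounds the number of $t\in B_K(x)$ for which $\rho_{l,t}(G_K)$ is conjugate into $M$ by the ``density'' $|M|/|\GSp{2g}{\bb{F}_l}|$ times $|B_K(x)|$, plus an error controlled by heights and ramification. Summing over the $O((\log l)^\gamma)$ conjugacy classes of maximal subgroups---a count furnished by the Aschbacher/Kleidman--Liebeck classification for $g\leq 2$---and over primes $l\leq L$ for an appropriate threshold $L$, one inputs this into the Serre-type large sieve inequality to obtain a cumulative saving of order $x^{-1/2}$ against a sifting density growing like a power of $\log x$.

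The main obstacle is the uniform-in-$l$ estimate. The straightforward large-sieve output covers only primes $l\leq L$, so one must separately argue that, for all but a negligible set of $t$, the specialization $\rho_{l,t}(G_K)$ cannot fail to be surjective at any prime $l>L$; this is where the exponent $2(g^2+g+2)\gamma+1$ and the extra $\log x$ factor emerge, coming respectively from the total number of maximal-subgroup conjugacy classes and from the ramification bound that controls how many primes can possibly cause non-surjectivity at a single $t$ of bounded height. Balancing the tail estimate against the large-sieve saving, with $L$ chosen of size roughly $x^{1/2}$, yields the claimed bound $|E_K(x)|/|B_K(x)|\ll (\log x)^{2(g^2+g+2)\gamma+1}/x^{1/2}$.
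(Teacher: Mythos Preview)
Your proposal has the right overall shape---invoke Theorem~\ref{general-thm} and check its inputs---but there is a genuine gap at the most delicate point, and several mismatches with how the paper is actually set up.

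The decisive issue is your treatment of large $l$. You write that one ``must separately argue that, for all but a negligible set of $t$, the specialization $\rho_{l,t}(G_K)$ cannot fail to be surjective at any prime $l>L$,'' and you attribute this to a ``ramification bound.'' There is no such ramification bound. What is needed here is precisely Property~\ref{prty4}, and the paper obtains it (Lemma~\ref{lmaD}) by invoking the deep effective open-image theorems of Masser--W\"ustholz for $g=1$ and Kawamura (Theorem~\ref{thm-Kawamura}) for $g=2$: if $\End_{\wbar K}(A_t)=\bb{Z}$ and $\rho_{l,t}$ is not surjective, then $l\ll h(A_t)^\gamma\ll(\log x)^\gamma$. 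This is exactly where the constant $\gamma$ in the exponent comes from; it is \emph{not} a count of maximal subgroups. Without this input the sieve gives nothing for $l$ beyond the cutoff, and your proposed balancing with $L\approx x^{1/2}$ cannot work (you are conflating the cutoff $L$ for the primes $l$ with the sieve parameter $Q=x^{1/2}$ for the auxiliary primes $\mfrak p$). Relatedly, you never isolate the set $C_K(x)=\{t:\End_{\wbar K}(A_t)=\bb{Z}\}$; the paper needs this, and controls $|B_K(x)\setminus C_K(x)|$ separately via Hilbert irreducibility, Vasiu's lifting, and Faltings' theorem (see \prettyref{eq:16}).

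A second mismatch: Theorem~\ref{general-thm} is built on Jordan's lemma (Lemma~\ref{lmaB}) and conjugacy classes of $G_l^g\cong\Sp{2g}{l}/\langle\pm1\rangle$, not on maximal subgroups. The exponents $\beta_1=2g^2+g+1$ and $\beta_2=g+1$ in Property~\ref{prty1} bound $|G_l^g|$ and $|G_l^{g\,\#}|$, and the final exponent $2(g^2+g+2)\gamma+1=(\beta_1+\beta_2+2)\gamma+1$ arises from summing the per-$l$ estimate \prettyref{eq:5} over $l\ll(\log x)^\gamma$. Your Aschbacher/Kleidman--Liebeck maximal-subgroup count plays no role in this argument (Kleidman--Liebeck enters the paper only inside Kawamura's proof). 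If you want to cite Theorem~\ref{general-thm}, you should supply Properties~\ref{prty1}--\ref{prty4} as stated rather than a parallel maximal-subgroup sieve.
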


The result in dimension 2 is new, but in dimension 1 it is little more than a hybrid of previous results.
A list of notation can be found in appendix \ref{notation}.  As a bonus we also show in section \ref{sec1} that if
\[
 C_K(x;Q)=\{t\in B_K(x):A_t\text{ has good reduction for all }\mfrak{p}\in \Cal{L}^*\},
\]
where $\Cal{L}$ is a subset of the prime ideals $\mfrak{p}$ of $\Cal{O}_K$, having $N(\mfrak{p})$ a prime $\leq Q$, then
we have a lower bound of the form
\[
 |C_K(x)|\gg \frac{x^{r+1}}{(\log Q)^\kappa}
\]
for some $\kappa\geq 0$.  Unfortunately, the relationship between $x$ and $Q$ is not sharp.

\subsection{Acknowledgments}
This paper comprises a portion of my dissertation. I would like to thank my advisor Michael Larsen for his guidance, and Aner Shalev for directing us to the work of
Kleidman and Liebeck.  I would also like to express my sincere thanks to the reviewer, who did an extremely thorough reading of this paper, and made some very
helpful suggestions.  For instance, the previous version used this lower bound for abelian varieties having good reduction at the primes in $\Cal{L}^*$, but because
the relationship between $x$ and $Q$ is not sharp, the estimate given previously was weaker.  The reviewer pointed out a way that this might be avoided, and it is
this new approach we have implemented in the article.  

\section{Sieve theory}\label{sec1}

The sieve methods we will use need to be compatible with heights on number fields.
This is accomplished using the following construction which can be compared with the
method of Schanuel \cite{Schanuel}.  Let $K$ \nomenclature{$K$}{a number field}
be a number field of degree $d$ and ring of integers $\Cal{O}_K$,
\nomenclature{$\Cal{O}_K$}{the ring of integers of $K$} and let
$S_\infty$ \nomenclature{$S_\infty$}{the set of infinite places of $K$}
denote the set of infinite places of $K$.  If $v\in S_\infty$, then
\[
 \norm{\cdot}_v=|\cdot|_v^{[K_v:\bb{R}]/d}
\]
defines a norm on $K_v$. \nomenclature{$K_v$}{the completion of $K$ at $v$}
If $u=(u_0:u_1:\cdots :u_r)\in \bb{P}_K^r(K)$, and $\mfrak{a}_u$ denotes the fractional ideal generated by the $u_i$, then we have
\[
 H(u)=N(\mfrak{a}_u)^{-\frac{1}{d}}\prod_{v\in S_\infty} \sup_i\norm{u_i}_v
\]
where $H$ is the absolute height. By scaling, it is possible to obtain coordinates $u_i$ in $\Cal{O}_K$. Making this choice
minimally gives us a representative in a fundamental domain of $K^{r+1}-0^{r+1}$ under the action of units.
Let $\Lam$ be the image of $\Cal{O}_K$ under the diagonal embedding $K\to\prod_{v\in S_\infty} K_v$,
\nomenclature{$\Lam$}{the image of $\Cal{O}_K$ under the diagonal embedding $K\to\prod_{v\in S_\infty} K_v$}
and let 
\[
 B_K(x)=\{u\in\bb{P}_K^r(K): H(u)\leq x\}.
\]
With the coordinates of each $u$ chosen as above, we first lift to $ K^{r+1}-0^{r+1}$ and then consider the image in $\prod_{v\in S_\infty} K_v^{r+1}$ under
the diagonal embedding, as illustrated by the following diagram
\[
 \bb{P}_K^{r}(K)\longleftarrow K^{r+1}-0^{r+1}\longrightarrow \prod_{v\in S_\infty} K_v^{r+1}.
\]
It is contained in $\Lam^{r+1}$, and by the results of Schanuel its size grows at a rate proportional to
that of a ball of radius $x$ in $\Lam^{r+1}$.  This will allow us to apply Serre's version of the large sieve (see \cite{Ser2} or \cite{Zyw1}).

In \cite{Kow2} Kowalski has given a language for sieves that we would like to generalize slightly,
and also use to discuss the constructions in our particular application. Then we will prove a version
of the large sieve that is compatible with heights on affine space, and an application of the
lower bound sieve that we will use with it.

A \emph{sieve setting} is a triple $(Y,\Cal{A},(\pi_\alpha))$ \nomenclature{$(Y,\Cal{A},(\pi_\alpha))$}{abstract notation for a sieve setting}
consisting of a set $Y$, an indexing set $\Cal{A}$, and for each $\alpha\in\Cal{A}$ a map
$\pi_\alpha:Y\to Y_\alpha$, where $Y_\alpha$ is a finite set.  For both the large sieve and the lower bound sieve,
we will take $Y=\Lam^{r+1}$.  The indexing set $\Cal{A}$ will be different in each case though: for the large sieve
we will take it to be $\Sig_K$,\nomenclature{$\Sig_K$}{the set of non-zero prime ideals in $\Cal{O}_K$}
the set of non-zero prime ideals in $\Cal{O}_K$, whereas for the lower bound sieve we will use the ordinary primes,
which we will view as ideals in $\Cal{O}_K$. If $\mfrak{a}$ is an arbitrary ideal in $\Cal{O}_K$, then it can be identified with
a sub-lattice of $\Lam$, and the quotient is isomorphic to $\Cal{O}_K/\mfrak{a}$; thus we obtain a natural map
$\pi_\mfrak{a}:\Lam^{r+1}\to(\Cal{O}_K/\mfrak{a})^{r+1}$. For the lower bound sieve, when we have $\mfrak{a}=(p)$, we denote this map by $\pi_p$.
If $\mfrak{a}=\mfrak{p}_1\mfrak{p}_2\cdots \mfrak{p}_k$ is square-free, then by the Chinese remainder theorem, we get an isomorphism
\[
 \vphi_\mfrak{a}:(\Cal{O}_K/\mfrak{p}_1)^{r+1}\oplus (\Cal{O}_K/\mfrak{p}_2)^{r+1}\oplus\cdots \oplus (\Cal{O}_K/\mfrak{p}_k)^{r+1} \to (\Cal{O}_K/\mfrak{a})^{r+1},
\]
such that the map $\pi_\mfrak{a}$ is compatible with the maps $\pi_{\mfrak{p}_i}$ for all $i$. 

A \emph{siftable set} is a triple $(X,\mu,F)$, \nomenclature{$(X,\mu,F)$}{abstract notation for a siftable set}
consisting of a finite measure space $(X,\mu)$, and a map $F:X\to Y$, such that the composite map $\pi_\alpha\circ F$ is measurable.
This introduces a lot of flexibility, because $X$ does not have to be a subset of $\Lam^{r+1}$.  The discussion at the beginning of
this section shows how to construct a map $F:X\to \Lam^{r+1}$, in the case where $X$ is a finite subset of $\bb{P}_K(K)$. If $\mu$
is the counting measure on $X$, then it is clear that the composite maps $\pi_\mfrak{p}\circ F$ are measurable, so this gives us a siftable set.
Now if we have a finite morphism $\vphi:U_K\to \bb{P}_K^r$, we get a height $H_\vphi$ on $U_K$ \nomenclature{$H_\vphi$}{a height on a variety
with respect to the map $\vphi$} by taking the composition $H\circ \vphi$ (we mostly follow \cite{Silv} for the general theory of heights,
but \cite{Ser2} is also useful), and then we can look at a subset $X$ of
\[
 B_K(x)=\{u\in U_K(K):H_\vphi(u)\leq x \}.
\]
Since the morphism $\vphi$ is finite, it follows that the set $X$ is finite, and so by composition with the map $\bb{P}_K^r(K)\to \Lam^{r+1}$ constructed above, we get a map
$F:X\to\Lam^{r+1}$. Also, note that since $\bb{P}_K^r$ has finite type over $K$ and $\vphi$ is a finite morphism, then $U_K$ also has finite type over $K$. This means that $U_K$
can be extended to a scheme over $\spec\Cal{O}_K[\frac{1}{\alpha}]$ for some $\alpha\in\Cal{O}_K$.  This enables us to consider reduction mod $\mfrak{p}$ for all but a finite
number of $\mfrak{p}\in\Sig_K$.

But we can go even further than this. If $\vphi:V_K\to \bb{P}_K^r$ \nomenclature{$V_K$}{a variety over $K$}
is a finite rational map, then by definition we have equivalence classes of pairs $\ang{U,\vphi_U}$ such that $\vphi_U:U\to \bb{P}_K^r$ is a finite morphism,
and hence we can apply the argument above to this morphism. Given two pairs $\ang{U_1,\vphi_1}$ and $\ang{U_2,\vphi_2}$, the compatibility condition gives us
\[
 H_{\vphi_1}|_{U_1\cap U_2}=H_{\vphi_2}|_{U_1\cap U_2},
\]
where $H_{\vphi_1},H_{\vphi_2}$ are the heights corresponding to $\vphi_1,\vphi_2$.  This allows us to safely speak of
a height $H_\vphi$ corresponding to the rational function $\vphi$, although it may not be defined on all of $V_K$. If we make the assumption that $V_K$ has finite type over $K$,
then it is noetherian, so the domain of the rational map has a finite cover by open sets $U_1,\ldots U_N$.  Additionally, the finite type assumption allows us to extend to a scheme
over $\spec\Cal{O}_K[\frac{1}{\alpha}]$ as noted above. In general, suppose $(X,\mu)$ is a finite measure space, but we don't have a map $F:X\to Y$.
If we have a finite cover $X$ by the sets $X_1,\ldots X_N$, such that $(X_i,\mu,F_i)$ is a siftable set, then we can use sub-additivity to extend the large sieve so that
it applies to $(X,\mu)$, and thus we would like to call this a siftable set also. In particular, we can regard any subset $X$ of
\[
 B_K(x)=\{t\in V_K(K):t\in U\text{ for some }\ang{U,\vphi_U},\text{ and }H_\vphi(t)\leq x \}
\]
as a siftable set.

A \emph{prime sieve support} $\Cal{L}^*$, \nomenclature{$\Cal{L}^*$}{the prime sieve support}
is a finite subset of the indexing set $\Cal{A}$, and a \emph{sieve support} $\Cal{L}$\nomenclature{$\Cal{L}$}{the sieve support} is a subset of the power set of $\Cal{L}^*$.
Let $\Sig_K(Q)$\nomenclature{$\Sig_K(Q)$}{$\{\mfrak{p}\in \Sig_K: N(\mfrak{p})< Q\}$} and
$\Sig_K^1(Q;1,l)$\nomenclature{$\Sig_K^1(Q;1,l)$}{$\{\mfrak{p} \in \Sig_K(Q):N(\mfrak{p})=q\text{ is prime, and } q\equiv 1 \mod l\}$} be defined as follows:
\begin{align*}
\Sig_K(Q)&=\{\mfrak{p}\in \Sig_K: N(\mfrak{p})< Q\}\\
\Sig_K^1(Q;1,l)&=\{\mfrak{p} \in \Sig_K(Q):N(\mfrak{p})=q\text{ is prime, and } q\equiv 1 \mod l\}.
\end{align*}
In our general statement of the large sieve, we can take $\Cal{L}^*$ to be any subset of $\Sig_K(Q)$, but in our application of we will use an appropriate subset of
$\Sig_K^1(Q;1,l)$. Note that any subset of $\Cal{L}^*$ can be viewed as a square-free product of these prime ideals. In particular we will define $\Cal{L}$, to be the set of all
square-free ideals $\mfrak{a}$, such that $N(\mfrak{a})< Q$ and for all $\mfrak{p}|\mfrak{a}$ we have
$\mfrak{p}\in \Cal{L}^*$.  We will need compatibility, between the primes used in the lower bound sieve and the prime ideals used in the large sieve.
If $\mfrak{p}\in \Sig_K^1(Q;1,l)$ and $N(\mfrak{p})=p$, then $p< Q$ hence these are the primes that will be used in the lower bound sieve. Now for each
$\alpha\in \Cal{L}^*$  we choose a \emph{sieving set} $\Om_\alpha\subset Y_\alpha$, which may be completely arbitrary.  If
$(X,\mu,F)$ is a siftable set under Kowalski's original definition, then the \emph{sifted set} is \nomenclature{$S(X,(\Om_\alpha);\Cal{L}^*)$}{abstract notation for the sifted set}
\[
 S(X,(\Om_\alpha);\Cal{L}^*)=\{x\in X| \pi_\alpha(F(x))\notin\Om_\alpha\;\forall\alpha\in\Cal{L}^*\},
\]
or more generally if $(X,\mu)$ is a siftable set under our extended definition, and $(X_i,\mu,F_i)$ is a finite cover by siftable sets under the original definition, then
the sifted set is
\[
 S(X,(\Om_\alpha);\Cal{L}^*)=\{x\in X|\exists i\text{ such that }x\in X_i\text{ and } \pi_\alpha(F_i(x))\notin\Om_\alpha\;\forall\alpha\in\Cal{L}^*\}.
\]
Finally, we take $\nu_\mfrak{p}$ to be the uniform probability measure on $Y_\mfrak{p}$.

\subsection{The large sieve}
We will prove a version of the large sieve for projective varieties, with the help of an older version for torsion free $\Cal{O}_K$-modules proven by Serre
\cite{Ser2}, which we restate below in notation compatible with that described above (see also Zywina's paper \cite{Zyw1}).
\begin{thm}
 Let $\Lam$ be a torsion free $\Cal{O}_K$-module with rank $r+1$ over $\Cal{O}_K$.  Let $\norm{\cdot}$ be a norm of $\Lam_\bb{R}=\bb{R}\otimes \Lam$.
Let $x\geq 1$ and $Q>0$ be real numbers, and for each $\mfrak{p}\in\Sig_K$, let $\om_\mfrak{p}\in[0,1]$.  Suppose that $E\subset \Lam$ satisfies the conditions:
\begin{enumerate}
 \item $E$ is contained in a ball of radius proportional to $x$.
 \item For every $\mfrak{p}$ with $N(\mfrak{p})\leq Q$, we have the inequality
\[
 |\pi_\mfrak{p}(E)|\leq (1-\om_\mfrak{p})|\Lam/\mfrak{p}\Lam|.
\]
\end{enumerate}
Then we have
\[
 |E|\ll_{K,\Lam,\norm{\cdot}} \frac{\max\{x^{(r+1)d},Q^{2(r+1)}\}}{L(Q)}
\]
where the implied constant depends only on $K$, $\Lam$, and $\norm{\cdot}$, and where
\[
 L(Q)=\sum_{\mfrak{a}}\prod_{\mfrak{p}|\mfrak{a}}\frac{\om_\mfrak{p}}{1-\om_\mfrak{p}},
\]
the sum being over all square-free ideals $\mfrak{a}$ with norm $\leq Q$.
\end{thm}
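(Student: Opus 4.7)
The plan is to follow the standard duality approach to the large sieve, adapted to the $\Cal{O}_K$-lattice setting. After fixing a non-degenerate $\Cal{O}_K$-bilinear pairing on $\Lam$ so that the additive character group of each finite quotient $\Lam/\mfrak{a}\Lam$ is identified with $\Lam/\mfrak{a}\Lam$ itself, the Chinese remainder isomorphism $\vphi_\mfrak{a}$ factors a character modulo a square-free $\mfrak{a}=\mfrak{p}_1\cdots\mfrak{p}_k$ into components, and I call such a character primitive when each component is nontrivial. The analytic core of the argument is the large sieve inequality
\[
 \sum_{\substack{\mfrak{a}\text{ sq.-free}\\ N(\mfrak{a})\leq Q}}\sum_{\chi\text{ prim. mod }\mfrak{a}}\abs{\what{f}(\chi)}^2 \ll_{K,\Lam,\norm{\cdot}} \max\{x^{(r+1)d},Q^{2(r+1)}\}\,\norm{f}_2^2,
\]
valid for any $f:\Lam\to\bb{C}$ supported in a ball of radius $\ll x$, where $\what{f}(\chi)=\sum_y f(y)\chi(y)$. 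The proof is by duality and Poisson summation on $\Lam$: the diagonal contribution produces the volume factor $x^{(r+1)d}$, while a spacing estimate for the characters at scale $Q$ in a fundamental domain produces the off-diagonal factor $Q^{2(r+1)}$, and taking the maximum reflects whichever regime dominates.

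To convert this into a bound on $|E|$, I specialize to $f=\mbf{1}_E$, so $\norm{f}_2^2=|E|$, and derive a matching lower bound on the left-hand side. For each $\mfrak{a}\in\Cal{L}$ let $g_\mfrak{a}(y)=\abs{\pi_\mfrak{a}^{-1}(y)\cap E}$ as a function on $\Lam/\mfrak{a}\Lam$. Parseval on $\Lam/\mfrak{a}\Lam$ gives $\sum_{\chi\bmod\mfrak{a}}\abs{\what{\mbf{1}_E}(\chi)}^2=\abs{\Lam/\mfrak{a}\Lam}\sum_y g_\mfrak{a}(y)^2$; Cauchy--Schwarz yields $\sum_y g_\mfrak{a}(y)^2\geq|E|^2/\abs{\pi_\mfrak{a}(E)}$; and the CRT inclusion $\pi_\mfrak{a}(E)\subseteq\prod_{\mfrak{p}|\mfrak{a}}\pi_\mfrak{p}(E)$ combined with the hypothesis on each $\abs{\pi_\mfrak{p}(E)}$ gives $\abs{\pi_\mfrak{a}(E)}\leq\abs{\Lam/\mfrak{a}\Lam}\prod_{\mfrak{p}|\mfrak{a}}(1-\om_\mfrak{p})$. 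Together these produce
\[
 \sum_{\chi\bmod\mfrak{a}}\abs{\what{\mbf{1}_E}(\chi)}^2\geq|E|^2\prod_{\mfrak{p}|\mfrak{a}}\frac{1}{1-\om_\mfrak{p}}.
\]
M\"obius inversion over divisors of $\mfrak{a}$ extracts the primitive contribution as $|E|^2\prod_{\mfrak{p}|\mfrak{a}}\om_\mfrak{p}/(1-\om_\mfrak{p})$, so summing over $\mfrak{a}\in\Cal{L}$ lower-bounds the left side of the analytic inequality by $|E|^2L(Q)$. Dividing by $|E|$ gives the claim.

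I expect the main obstacle to be the analytic inequality itself: tracking the dependence of the implied constant only on $(K,\Lam,\norm{\cdot})$ requires careful attention to the geometry of a fundamental domain for $\Lam\subset\Lam_\bb{R}$, uniform counting of ideals of bounded norm, and justifying the replacement of a sum by the maximum in the two-regime estimate. Since this is precisely the content of Serre's theorem cited in the statement, in practice I would verify that our formulation of $\Lam$ and the norm matches his and then invoke his result directly rather than reprove it.
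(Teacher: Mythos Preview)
The paper does not prove this statement at all: it is explicitly introduced as a restatement of Serre's large sieve for torsion-free $\Cal{O}_K$-modules \cite{Ser2} (with a pointer also to Zywina \cite{Zyw1}), and is then used as a black box in the proof of Theorem~\ref{thm-sieve}. Your closing remark---that in practice one checks the hypotheses match and invokes Serre directly---is exactly what the paper does, so at that level your proposal and the paper agree.

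Your sketch goes further than the paper by outlining the standard duality proof, and the outline is broadly correct. One caution: the step ``M\"obius inversion over divisors of $\mfrak{a}$ extracts the primitive contribution as $|E|^2\prod_{\mfrak{p}|\mfrak{a}}\om_\mfrak{p}/(1-\om_\mfrak{p})$'' is more delicate than it sounds. You have established the inequality $\sum_{\chi\bmod\mfrak{a}}|\what{\mbf{1}_E}(\chi)|^2\geq |E|^2\prod_{\mfrak{p}|\mfrak{a}}(1-\om_\mfrak{p})^{-1}$ for each square-free $\mfrak{a}$, but M\"obius inversion of an inequality is not legitimate because of the alternating signs. The standard fix (as in Montgomery, Gallagher, or Serre) is to work instead with the exact Parseval identity $\sum_{\chi\bmod\mfrak{a}}|\what{\mbf{1}_E}(\chi)|^2=|\Lam/\mfrak{a}\Lam|\cdot\|g_\mfrak{a}\|_2^2$ and to organize the combinatorics so that the primitive contribution is bounded below termwise; this requires a short additional argument (often phrased via an auxiliary detector function on $\Lam/\mfrak{a}\Lam$, or an induction on the number of prime factors) rather than a bare appeal to M\"obius. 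Since you are ultimately deferring to Serre anyway, this is a point of exposition rather than a genuine gap.
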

In particular the constant does not depend on $x,Q$ or the numbers $\om_\mfrak{p}$, so long as $E$ satisfies the conditions in the statement of the theorem.
This means that it depends only on the sieve setting, which in our application remains the same for all $l$. Also it is important to realize,
that the set $E$ is actually the sifted set. The statement of the large sieve for projective varieties is as follows.

\begin{thm}[Large Sieve]\label{thm-sieve}
Suppose we have a finite rational map $\vphi:V_K\to \bb{P}_K^r$ and that $V_K$ has finite type over $K$. Let $(X,\mu)$ be a siftable set for the sieve setting
$(\Lam^{r+1},\Sig_K, (\pi_\mfrak{p}))$, such that $X$ is contained in 
\[
B_K(x)=\{t\in V_K(K):t\in U\text{ for some }\ang{U,\vphi_U},\text{ and }H_\vphi(t)\leq x \}.
\]
Let $\Cal{L}^*$ be an arbitrary subset of $\Sig_K(Q)$ and let $(\Om_\mfrak{p})$ be an arbitrary family of sieving sets. Then
\begin{equation}\label{eq:3}
 |S(X,(\Om_\mfrak{p});\Cal{L}^*)|\ll_{K,r,\vphi} \frac{\max\{x^{(r+1)[K:\bb{Q}]},Q^{2(r+1)}\}}{L(Q)}
\end{equation}
where the implied constant depends only on $K,r,\vphi$, and
\begin{equation}\label{eq:4}
L(Q)=\sum_{\mfrak{a}\in\Cal{L}}\prod_{\mfrak{p}|\mfrak{a}}\frac{\nu_\mfrak{p}(\Om_p)}{\nu_\mfrak{p}(Y_\mfrak{p}-\Om_\mfrak{p})}
\end{equation}
where $\Cal{L}$ is the set of all square-free ideals $\mfrak{a}$, such that $N(\mfrak{a})\leq Q$ and for all $\mfrak{p}|\mfrak{a}$ we have $\mfrak{p}\in \Cal{L}^*$.
\end{thm}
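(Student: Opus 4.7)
The plan is to reduce the statement to the version of Serre's large sieve for torsion-free $\Cal{O}_K$-modules that is quoted immediately before it. The mechanism for this reduction has already been set up: we have the map $\bb{P}_K^r(K) \to \Lam^{r+1}$ built from minimal integral representatives, and the observation that composing with $\vphi$ pulls a height back to $U_K$. So I would begin by using the assumption that $V_K$ is of finite type over $K$ to extract a finite open cover $U_1,\ldots,U_N$ of the domain of $\vphi$ by affine pieces on which $\vphi$ restricts to a finite morphism $\vphi_i : U_i \to \bb{P}_K^r$; on each $U_i$ the composition $F_i : X \cap U_i \to \Lam^{r+1}$ is well-defined, and the compatibility of heights on overlaps (already noted in the text) guarantees that the condition $H_\vphi(t)\leq x$ has the same meaning irrespective of which chart we use.

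Next I would verify the two hypotheses of Serre's version for each siftable piece $(X \cap U_i, \mu, F_i)$. Condition (i) — that $F_i(X \cap U_i)$ sits inside a ball of radius proportional to $x$ in $\Lam^{r+1}_\bb{R}$ — follows from the Schanuel-type analysis sketched at the start of the section: after choosing minimal integral coordinates, the $v$-norms of the coordinates are controlled by $H$, and $H_\vphi(t) = H(\vphi(t)) \leq x$ thus forces $F_i(t)$ into a ball whose radius is a constant (depending on $K$ and $\vphi_i$) times $x$. Condition (ii) is the sifting condition: for $\mfrak{p} \in \Cal{L}^*$, write $\om_\mfrak{p} = \nu_\mfrak{p}(\Om_\mfrak{p})$, so that the sifted set has image under $\pi_\mfrak{p}$ lying in $Y_\mfrak{p} - \Om_\mfrak{p}$, which has cardinality at most $(1-\om_\mfrak{p})|\Lam/\mfrak{p}\Lam|$ because $\nu_\mfrak{p}$ is the uniform probability measure on $(\Cal{O}_K/\mfrak{p})^{r+1} = \Lam/\mfrak{p}\Lam$.

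With both hypotheses in place, Serre's theorem applies to each piece and yields
\[
 |S(X \cap U_i,(\Om_\mfrak{p});\Cal{L}^*)| \ll_{K,\Lam,\norm{\cdot}} \frac{\max\{x^{(r+1)d}, Q^{2(r+1)}\}}{L(Q)},
\]
with $L(Q)$ exactly as in \prettyref{eq:4} once one rewrites $\om_\mfrak{p}/(1-\om_\mfrak{p}) = \nu_\mfrak{p}(\Om_\mfrak{p})/\nu_\mfrak{p}(Y_\mfrak{p}-\Om_\mfrak{p})$. Subadditivity of the counting measure over the cover (this is precisely the extended definition of siftable set given in the text) lets us sum these $N$ bounds; since $N$ depends only on $\vphi$, the implicit constant absorbs the sum and depends only on $K, r$, and $\vphi$, yielding \prettyref{eq:3}.

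The step I expect to require the most care is checking condition (i) uniformly across the finite cover and across all choices of $(\Om_\mfrak{p})$ and $Q$. Concretely, one must pin down the proportionality constant between $H_\vphi(t)\leq x$ and the Euclidean radius of $F_i(t)$ in $\Lam^{r+1}_\bb{R}$, and argue that the dependence on $\vphi_i$ — which comes in through the choice of homogeneous coordinates representing the morphism and through the denominators needed to clear into $\Cal{O}_K[\tfrac{1}{\alpha}]$ — is benign, so that the final constant truly depends only on $K, r, \vphi$ and not on the sieving data. Everything else is bookkeeping: the combinatorics of $\Cal{L}$ as square-free products of ideals in $\Cal{L}^*$, and the Chinese remainder compatibility already established for the maps $\pi_\mfrak{a}$.
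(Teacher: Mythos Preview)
Your approach is essentially the paper's: cover the domain of $\vphi$ by finitely many charts $U_i$, verify Serre's two hypotheses on each $F_i(X\cap U_i)\subset\Lam^{r+1}$, and combine by subadditivity. Two small points the paper makes explicit that you gloss over: (a) Serre's condition \emph{2} must hold for \emph{every} $\mfrak{p}$ with $N(\mfrak{p})\le Q$, not only those in $\Cal{L}^*$, so one sets $\om_\mfrak{p}=0$ outside $\Cal{L}^*$ and then checks that Serre's $L(Q)$ collapses to the sum over $\Cal{L}$ in \prettyref{eq:4}; (b) Serre's bound is for the image in $\Lam^{r+1}$, and while $\bb{P}_K^r(K)\to\Lam^{r+1}$ is injective, $\vphi$ need not be, so the passage back to $|S(X_i,(\Om_\mfrak{p});\Cal{L}^*)|$ costs a factor bounded by the degree of $\vphi$---this, not just the cover size $N$, is where the $\vphi$-dependence in the constant genuinely enters.
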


\begin{rmk*}
 The theorem is stated for relative heights. It is a simple matter to obtain a version of this theorem for absolute heights by observing that we have $H=H_K^{1/[K:\bb{Q}]}$,
 where $H_K$ is the height relative to $K$ and $H$ is the absolute height.  This means that the  theorem remains correct for absolute heights if we replace the numerator
 of the fraction on the right hand side of \prettyref{eq:3} by $\max\{x^{r+1},Q^{2(r+1)}\}$.  It should also be noted that the implied constant depends only on data from the
 sieve setting, $K,r$, and on data from the siftable set, $\vphi$. In particular it does not depend in any way on $\Cal{L}^*$.
\end{rmk*}

\begin{proof}
By the noetherian condition we may assume that the domain of $\vphi$ has a finite cover $U_1,U_2,\ldots U_N$. Let $\vphi_i$ be the restriction of $\vphi$ to $U_i$, and define
\[
 X_i=X\cap U_i(K)\quad\text{and}\quad F_i:X_i\to \Lam^{r+1},
\]
where $\Lam$ is the Minkowski lattice corresponding to $K$, and the map $F_i$ is constructed as above. As noted above, Serre's theorem is stated for the sifted set,
hence for convenience we define
\[
 E_i=S(X_i,(\Om_\mfrak{p});\Cal{L}^*).
\]

Now $\Lam^{r+1}$ is a torsion free $\Cal{O}_K$-module of rank $r+1$, but we must still verify that conditions \emph{1} and \emph{2} hold.
The discussion at the beginning of the section shows that $E_i$ is contained in a ball of radius proportional to $x$, which shows that condition \emph{1} holds.
As for condition \emph{2}, suppose that $\mfrak{p}\in\Sig_K$ has norm $N(\mfrak{p})\leq Q$. If $\mfrak{p}\in\Cal{L}^*$, then
\[
 \pi_\mfrak{p}(E_i)\subset Y_\mfrak{p}-\Om_\mfrak{p}
\]
where $Y_\mfrak{p}=(\Lam/\mfrak{p}\Lam)^{r+1}$.  Even if $\mfrak{p}\notin\Cal{L}^*$ we can still consider the images of $E_i$, which will be trivially contained
in $Y_\mfrak{p}$.
Therefore, if we define
\[
 \omega_\mfrak{p}=\begin{cases}
                   \nu_\mfrak{p}(\Omega_\mfrak{p})& \text{if }\mfrak{p}\in \Cal{L}^*\\
		   0		& \text{otherwise}
                  \end{cases}
\]
then
\[
\nu_\mfrak{p}(\pi_\mfrak{p}(E_i))\leq  1-\om_\mfrak{p}
\]
in all cases.  Since $\nu_\mfrak{p}$ is the uniform measure on $Y_\mfrak{p}$, this gives us condition \emph{2}.  By Serre's theorem,
we then have
\[
 |E_i|\ll_{\Lam^{r+1},\norm{\cdot}}\frac{\max\{x^{(r+1)[K:\bb{Q}]},Q^{2(r+1)}\}}{L(Q)}
\]
where the implied constant depends only on $\Lam^{r+1}$ and $\norm{\cdot}$, and
\[
 L(Q)=\sum_{\mfrak{a}}\prod_{\mfrak{p}|\mfrak{a}}\frac{\om_\mfrak{p}}{1-\om_\mfrak{p}}
\]
where the sum is over all square-free ideals such that $N(\mfrak{a})\leq Q$.  By the definition of $\om_\mfrak{p}$ it follows that this definition of $L(Q)$ is equivalent
with the one in the statement of the theorem.  In our case, the chosen norm is determined completely by $\Lam^{r+1}$, and $\Lam^{r+1}$ depends only on $K$ and on $r$, so
the implied constant really only depends on $K$ and $r$, and we indicate this by changing the subscript in the inequality.
We get from $E_i$ back to $S(X_i,(\Om_\mfrak{p});\Cal{L}^*)$, via the maps
\[
\xymatrix{
 V_K(K)\ar[r]^\vphi& \bb{P}_K^r(K)\ar[r]& \Lam^{r+1}.
 }
\]
The second map is injective, so it follows that $|E_i|=|S(\vphi(X_i),(\Om_\mfrak{p});\Cal{L}^*)|$, but the map $\vphi$ does not have to be injective. Since $\vphi$ is
finite, at the very least we have
\[
 |S(X_i,(\Om_\mfrak{p});\Cal{L}^*)|\ll_\vphi |S(\vphi(X_i),(\Om_\mfrak{p});\Cal{L}^*)|,
\]
and finally to get back to $S(X,(\Om_\mfrak{p});\Cal{L}^*)$, we use finite sub-additivity.
\end{proof}

\subsection{The lower bound sieve}
Iwaniec and Kowalski have established the following version of Brun's sieve.
\begin{thm}\label{Iwan-Kow}
Let $\kappa>0,D>1$ and $\alpha(u)\geq 0$ for all $u\in X$. There exist upper and lower-bound sieve coefficients $(\lam_d^\pm)$ depending only on
$\kappa$ and $D$, supported on square-free integers $<D$, bounded by one in absolute value, with the following properties: for all
$s\geq 9 \kappa + 1$ and $Q^{9\kappa +1}<D$, we have
\begin{align*}
 \int_{S(X,(\Om_p);Q)}\alpha(u)\,d\mu(u)&<(1+e^{9\kappa+1-s}C^{10})\prod_{p< Q}(1-\nu_p(\Om_p))H+ R^+(X;Q^s),\\
 \int_{S(X,(\Om_p);Q)}\alpha(u)\,d\mu(u)&>(1-e^{9\kappa+1-s}C^{10})\prod_{p< Q}(1-\nu_p(\Om_p))H- R^-(X;Q^s)
\end{align*}
provided that
\begin{equation}\label{eq:14}
 \prod_{w\leq p<Q} \frac{1}{1-\nu_p(\Om_p)}\leq C \paren{\frac{\log Q}{\log w}}^\kappa,
\end{equation}
for all $w$ and $Q$ satisfying $2\leq w<Q<D$.
\end{thm}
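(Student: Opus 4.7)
The plan is to deduce this from the Fundamental Lemma of the combinatorial (Brun) sieve, of which Theorem \ref{Iwan-Kow} is a reformulation in the abstract measure-theoretic language set up above; the substance of the argument is combinatorial, and the translation to the present setting is routine. First I would construct Brun-type upper- and lower-bound coefficients $\lam_d^\pm$, supported on square-free $d < D$ and bounded by $1$ in absolute value. Starting from $\sum_{d\mid n}\mu(d) = \mathbf{1}_{n=1}$, Brun's truncation retains only those square-free $d = p_1 p_2 \cdots p_r$ with $p_1 > p_2 > \cdots > p_r$ whose partial products $p_1 \cdots p_{2j}$ (respectively $p_1 \cdots p_{2j+1}$) remain below $D$, producing the enveloping inequalities
\begin{equation*}
\sum_{d\mid n}\lam_d^-\;\le\;\mathbf{1}_{n=1}\;\le\;\sum_{d\mid n}\lam_d^+.
\end{equation*}

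Next I would transport the sieve into the present setting. For each square-free $d = p_1 \cdots p_k$ with all $p_i < Q$, set $\chi_d(u) = \prod_{p\mid d}\mathbf{1}_{\pi_p(F(u)) \in \Om_p}$ and $\nu_d(\Om_d) = \prod_{p\mid d}\nu_p(\Om_p)$. Since the indicator of the sifted set factors as $\prod_{p < Q}(1 - \mathbf{1}_{\pi_p(F(u)) \in \Om_p})$, applying the enveloping inequalities with $n$ equal to the number of ``hits'' yields
\begin{equation*}
\sum_{d < D}\lam_d^- \chi_d(u) \;\le\; \mathbf{1}_{u \in S(X,(\Om_p);Q)} \;\le\; \sum_{d < D}\lam_d^+ \chi_d(u).
\end{equation*}
Integrating against $\alpha\,d\mu$ and writing $H = \int_X \alpha\,d\mu$, $r(d) = \int_X \alpha \chi_d\,d\mu - \nu_d(\Om_d)H$, the integral over the sifted set is sandwiched between $H \sum_{d<D} \lam_d^\pm \nu_d(\Om_d)$ up to a remainder $\sum_{d < D}\abs{r(d)}$, which is absorbed into $R^\pm(X;Q^s)$ once $D$ is chosen with $D > Q^{9\kappa+1}$.

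The main obstacle, and the technical heart of the Iwaniec--Kowalski proof, is the combinatorial estimate
\begin{equation*}
\sum_{d < D}\lam_d^\pm \nu_d(\Om_d) \;=\; \bigl(1 + O(e^{9\kappa+1-s}C^{10})\bigr)\prod_{p < Q}\bigl(1 - \nu_p(\Om_p)\bigr),
\end{equation*}
valid for $s \ge 9\kappa + 1$ under the dimension condition \prettyref{eq:14}. This is proved by a Buchstab iteration: at each level one peels off the largest prime factor of $d$, compares the resulting partial sum with the full Mertens-type product over $p < Q$, and uses \prettyref{eq:14} to control the mass of the truncation tails. The explicit threshold $9\kappa + 1$ and the constant $C^{10}$ arise from optimizing this iteration against the Brun truncation depth. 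Combining this estimate with the decomposition above gives the claimed two-sided bounds.
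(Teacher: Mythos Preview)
The paper does not prove Theorem~\ref{Iwan-Kow} at all: it is simply quoted as an established result of Iwaniec and Kowalski (``Iwaniec and Kowalski have established the following version of Brun's sieve''), with no argument supplied. So there is no proof in the paper to compare your proposal against.

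Your sketch is a reasonable outline of the standard Brun/Fundamental Lemma argument as it appears in the Iwaniec--Kowalski book, and nothing in it is obviously wrong as a high-level summary. But for the purposes of this paper the correct ``proof'' is just a citation; if you want to include an argument, you should either give a precise reference to the relevant theorem in Iwaniec--Kowalski, or, if you insist on a self-contained treatment, flesh out the Buchstab iteration and the truncation-tail estimate rigorously rather than asserting that the constants $9\kappa+1$ and $C^{10}$ ``arise from optimizing this iteration,'' since that is where all the actual work lies.
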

In this theorem $H$ and $R^\pm(X;Q^s)$ are defined as follows
\begin{align*}
 H&=\int_X\alpha(x)\,d\mu(x)&
 S_d(X;\alpha)&=\nu_d(\Om_d)H+r_d(X;\alpha)\\
 P(Q)&=\mathop{\prod_{p<Q}}_{l\in\Cal{L}^*} l&
 R^\pm(X;Q^s)&=\mathop{\sum_{d<Q^s}}_{d|P(Q)}|\lam_d^\pm r_d(X;\alpha)|.
\end{align*}
Thus $H$ can be regarded as the main term and $r_d$ or $R^\pm$ are regarded as remainder terms. We apply this to
the sieve setting $(\Lam^{r+1},\Sig_\bb{Q},(\pi_p))$, and siftable set $(B_K(x),\mu,F)$, where
\[
 B_K(x)=\{u\in \bb{P}_K^r(K):H(u)\leq x \},
\]
where $\mu$ is the counting measure. 

We also define
\[
 B_\Lam(x)=\left \{a\in\Lam^{r+1}:\prod_{v\in S_\infty} \sup_i \norm{a_{iv}}_v\leq x\right\},
\]
where $i$ ranges from 0 to $r$, and we denote the uniform probability measure on $B_\Lam(x)$ by $P$.  By construction, the image
of $B_K(x)$ in $\Lam^{r+1}$ under the map $F$ will be contained in $B_\Lam(x)$, which is easier to work with.

The sifting sets are constructed by first choosing subsets
$\Om_\mfrak{p}\subset (\Cal{O}_K/\mfrak{p})^{r+1}$, and looking at
\[
 A_p=\vphi_p(\prod_{\mfrak{p}|p} A_\mfrak{p})\subset (\Cal{O}_K/p)^{r+1}
\]
where $A_\mfrak{p}$ is the complement of $\Om_\mfrak{p}$.  Here the map $\vphi_\mfrak{p}$ comes from the Chinese remainder theorem,
as discussed at the beginning of the section. Then we define $\Om_p$ to be the complement of $A_p$.  If $\nu_\mfrak{a}$
denotes the uniform probability measure on $(\Cal{O}_K/\mfrak{a})^{r+1}$ then the Chinese
remainder theorem then gives us
\begin{equation}\label{eq:CRT-1}
  1 - \nu_p(\Om_p)=\prod_{\mfrak{p}|p} (1-\nu_\mfrak{p}(\Om_\mfrak{p})).
\end{equation}
In order to estimate the remainder terms $R^\pm(X;Q^s)$, we will need the following lemma.
\begin{lma}\label{lma-lbs}
 Let $\mfrak{a}$ be an ideal, with $N(\mfrak{a})\leq D^d$.  If
 \[
 S_\mfrak{a}\subset (\Cal{O}_K/\mfrak{a})^{r+1}\quad\text{and}\quad S=B_\Lam(x)\cap\pi_\mfrak{a}^{-1}(S_\mfrak{a})
 \]
 then
\[
 \abs{P(S)-\frac{|S_\mfrak{a}|}{N(\mfrak{a})^{r+1}}}\ll_{K,r} \begin{cases}
								 \frac{D^2\log x}{x} &\text{if }d=r=1,\\
								 \frac{D^{d(r+1)}}{x} &\text{otherwise},
                                                              \end{cases}
\]
where the implied constant depends only on $K,r$.
 \end{lma}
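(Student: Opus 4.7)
The plan is to reduce the statement to a lattice-point count for the sublattice $\mfrak{a}^{r+1} \subset \Lam^{r+1}$. Fix any set-theoretic section $\bar a \mapsto a_0(\bar a)$ of $\pi_\mfrak{a}$. Then
\[
 |S| = \sum_{\bar a \in S_\mfrak{a}} \bigl|(a_0(\bar a) + \mfrak{a}^{r+1}) \cap B_\Lam(x)\bigr|, \qquad |B_\Lam(x)| = \sum_{\bar a \in (\Lam/\mfrak{a})^{r+1}} \bigl|(a_0(\bar a) + \mfrak{a}^{r+1}) \cap B_\Lam(x)\bigr|,
\]
so both cardinalities are sums of coset intersections with $B_\Lam(x)$ for the single sublattice $\mfrak{a}^{r+1}$. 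It thus suffices to estimate one such coset count uniformly in the translate.

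Next I would apply a Schanuel-style geometry-of-numbers argument. The sublattice $\mfrak{a}^{r+1}$ has covolume $N(\mfrak{a})^{r+1} C_0$ in $\prod_v K_v^{r+1}$, where $C_0 = \operatorname{covol} \Lam^{r+1}$, and by Minkowski its fundamental parallelepiped has diameter $\ll_K N(\mfrak{a})^{1/d} \leq D$. Since $B_\Lam(\lambda x) = \lambda B_\Lam(x)$, we have $\operatorname{vol} B_\Lam(x) \asymp_K x^{d(r+1)}$ and the surface measure of $\partial B_\Lam(x)$ scales accordingly. A standard Lipschitz-boundary argument then yields
\[
 \bigl|(a_0 + \mfrak{a}^{r+1}) \cap B_\Lam(x)\bigr| = \frac{\operatorname{vol} B_\Lam(x)}{N(\mfrak{a})^{r+1} \, C_0} + O_{K,r}(E),
\]
uniformly in $a_0$, where $E = E(x,D)$ bounds the boundary contribution. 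Substituting into $P(S) = |S|/|B_\Lam(x)|$, the main terms combine exactly to $|S_\mfrak{a}|/N(\mfrak{a})^{r+1}$, and routine algebra together with the crude bound $|S_\mfrak{a}| \leq N(\mfrak{a})^{r+1} \leq D^{d(r+1)}$ shows the difference is $\ll_{K,r} D^{d(r+1)} E / \operatorname{vol} B_\Lam(x)$.

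The main obstacle is thus establishing $E$ with the correct dependence on $x$. In the generic case, a Lipschitz parameterisation of $\partial B_\Lam(x)$ (after decomposing along fundamental domains for the diagonal unit action when $K \neq \bb{Q}$, following Schanuel) gives $E \ll x^{d(r+1)-1}$, producing the asserted $D^{d(r+1)}/x$. In the exceptional case $d = r = 1$, where $B_\Lam(x) = [-x, x]^2$ is a plain box, the analogous count produces an extra $\log x$ factor when summing boundary contributions across all residues modulo $\mfrak{a}$ uniformly, yielding the claimed $D^2 \log x / x$. This uniform boundary control for shifted sublattices of bounded covolume is the single delicate step of the proof.
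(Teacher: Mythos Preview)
Your approach is essentially the one the paper takes: decompose $S$ into cosets of $(\mfrak{a}\Lam)^{r+1}$, invoke a Schanuel-type lattice-point count for each coset inside $B_\Lam(x)$ to get a main term $\operatorname{vol}(B_\Lam(x))/N(\mfrak{a})^{r+1}$ with a boundary error, divide by $|B_\Lam(x)|$, and absorb the factor $|S_\mfrak{a}|\leq D^{d(r+1)}$. The paper simply cites Schanuel for the count and then performs the same division and crude bound.

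One small correction: your justification of the extra $\log x$ when $d=r=1$ does not work. Summing boundary errors over at most $N(\mfrak{a})^{r+1}\leq D^2$ residue classes cannot manufacture a logarithm; that factor is already the $D^2$ you have extracted. In this case $K=\bb{Q}$ and $B_\Lam(x)$ is literally the box $[-x,x]^2$, so each coset count is $(2x/m+O(1))^2$ with error $O(x)$, and your scheme actually yields the stronger bound $D^2/x$. The $\log x$ in the statement is inherited from Schanuel's \emph{projective} count for $\bb{P}^1(\bb{Q})$ (where it arises from the M\"obius sum over common divisors) and is not sharp for the affine lattice count used here; since your bound implies the stated one, this is not a gap, just an overly generous error term in the lemma.
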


\begin{proof}
Schanuel \cite{Schanuel} has proven that if $\mfrak{a}$ is an ideal of $\Cal{O}_K$,
then the number of lattice points in $B_\Lam(x)\cap (\mfrak{a}\Lam)^{r+1}$ is 
\[
 \varkappa \frac{x^{d(r+1)}}{N(\mfrak{a})^{r+1}}+\begin{cases}
						  O(x\log x)&\text{if }d=r=1,\\
                                                  O(x^{d(r+1)-1})&\text{otherwise},
                                                 \end{cases}
\]
where $\varkappa$ depends only on $K$ and on $d$, and the same goes for the implied constant in the error term (see also Serre \cite{Ser2}).
However, because of the method of proof, we can also use this estimate for the number of full copies of a system of representatives for
$(\Lam/\mfrak{a}\Lam)^{r+1}$ that can be found in $B_\Lam(x)$.  If $S\subset  (\Lam/\mfrak{a}\Lam)^{r+1}$, then this gives us the estimate
\[
 \frac{|S|}{|S_\mfrak{a}|}= \frac{|B_\Lam(x)|}{N(\mfrak{a})^{r+1}}+\begin{cases}
								    O\paren{\frac{|B_\Lam(x)|\log x}{x}}&\text{if }d=r=1,\\
                                                                    O\paren{\frac{|B_\Lam(x)|}{x}}&\text{otherwise}.
                                                                   \end{cases}
\]
If we multiply by $\frac{|S_\mfrak{a}|}{|B_\Lam(x)|}$ and use the bound $|S_\mfrak{a}|\leq D^{d(r+1)}$, then we obtain the result in the lemma.
\end{proof}

\begin{thm}\label{thm-lbs}
Given a homogeneous polynomial $f$ in $K[t_0,\ldots ,t_r]$, let $S$ be a finite subset of $\Sig_K^1$
\nomenclature{$\Sig_K^1$}{$\{\mfrak{p} \in \Sig_K:N(\mfrak{p})=q\text{ is prime }\}$} satisfying the following properties
\begin{enumerate}
 \item if $f \equiv 0\mod \mfrak{p}$, then $\mfrak{p}\in S$,
 \item if $\mfrak{p}\in S$ and $\mfrak{p}|p$, where $p$ is a prime in $\bb{Z}$, then $\mfrak{q}\in S$ for all $\mfrak{q}|p$,
 \item if $p$ ramifies in $K$ and $\mfrak{p}|p$ then $\mfrak{p}\in S$,
\end{enumerate}
and for $\mfrak{p}\notin S$ let
\begin{equation}\label{eq:lbs1}
 \Om_\mfrak{p}=\{(t_0,\ldots, t_r)\in\bb{F}_\mfrak{p}^{r+1}: f(t_0,\ldots t_r)\equiv 0 \mod \mfrak{p}\}.
\end{equation}
Let $\Cal{L}^*\subset \Sig_K^1(Q)\rcomp S$. \nomenclature{$\Sig_K^1(Q)$}{$\{\mfrak{p} \in \Sig_K(Q):N(\mfrak{p})=q\text{ is prime}\}$}
Then there exists $\kappa$ such that
\[
 |\{u\in B_K(x):\pi_\mfrak{p}(F(u))\notin \Om_\mfrak{p} \;\forall\mfrak{p}\in\Cal{L}^*\}|
\gg_{K,r} \frac{x^{r+1}}{(\log Q)^\kappa},
\]
so long as $Q^{s(d(r+1)+1)}\leq x^\frac{1}{2}$ and $s> 9\kappa+1 +10\cdot \log C$, where $C$ satisfies \prettyref{eq:14}.
\end{thm}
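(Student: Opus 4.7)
The plan is to apply Theorem \ref{Iwan-Kow} to the siftable set $(B_K(x),\mu,F)$ with constant weight $\alpha\equiv 1$, using the sieve setting $(\Lam^{r+1},\Sig_\bb{Q},(\pi_p))$ indexed by the rational primes. Let $\Cal{P}^*$ denote the set of rational primes $p<Q$ lying below some $\mfrak{p}\in\Cal{L}^*$, and for each $p\in\Cal{P}^*$ form $\Om_p$ via the CRT construction given just before \prettyref{eq:CRT-1} from the hypersurface sets $\Om_\mfrak{p}$ at all $\mfrak{p}|p$; set $\Om_p=\emptyset$ for $p\notin\Cal{P}^*$. Conditions (1)--(3) on $S$ guarantee that every $\mfrak{p}|p$ with $p\in\Cal{P}^*$ lies outside $S$, so that each $\Om_\mfrak{p}$ is legitimately defined by \prettyref{eq:lbs1}. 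The identity \prettyref{eq:CRT-1} shows that $\pi_p(F(u))\notin\Om_p$ is equivalent to $\pi_\mfrak{p}(F(u))\notin\Om_\mfrak{p}$ for every $\mfrak{p}|p$, which is strictly stronger than the condition asked for in the theorem (only $\mfrak{p}\in\Cal{L}^*$); hence a lower bound for the sifted set $S(B_K(x),(\Om_p);\Cal{P}^*)$ implies the stated one.

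To invoke Theorem \ref{Iwan-Kow} I must verify hypothesis \prettyref{eq:14}. By condition (1), $f$ does not vanish identically modulo any $\mfrak{p}\notin S$, so a Schwartz--Zippel bound gives $\nu_\mfrak{p}(\Om_\mfrak{p})\le \deg(f)/N(\mfrak{p})$. Combined with $1-\nu_p(\Om_p)=\prod_{\mfrak{p}|p}(1-\nu_\mfrak{p}(\Om_\mfrak{p}))$ this yields
\[
\sum_{w\le p<Q}\log\frac{1}{1-\nu_p(\Om_p)}\ll \sum_{w\le N(\mfrak{p})<Q}\frac{1}{N(\mfrak{p})}\ll \log\frac{\log Q}{\log w},
\]
where the last inequality is Mertens's theorem for $K$, a consequence of Landau's prime ideal theorem. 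Exponentiating produces \prettyref{eq:14} with some $\kappa=\kappa(K,\deg f)$ and absolute $C$, and simultaneously gives $\prod_{p<Q}(1-\nu_p(\Om_p))\gg(\log Q)^{-\kappa}$ for the main term.

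For the remainder, apply Lemma \ref{lma-lbs} with $\mfrak{a}=d\,\Cal{O}_K$, of norm $d^{[K:\bb{Q}]}\le Q^{s[K:\bb{Q}]}$, to bound each individual residue by $|r_d(B_K(x);1)|\ll Q^{s[K:\bb{Q}](r+1)}|B_K(x)|/x$ (with a harmless $\log x$ in the edge case $[K:\bb{Q}]=r=1$). Summing $\lam_d^{\pm}r_d$ over the $O(Q^s)$ square-free $d|P(Q)$ with $d<Q^s$ gives
\[
R^\pm(B_K(x);Q^s)\ll \frac{Q^{s([K:\bb{Q}](r+1)+1)}}{x}\,|B_K(x)|,
\]
which is negligible compared with $(\log Q)^{-\kappa}|B_K(x)|$ under the hypothesis $Q^{s([K:\bb{Q}](r+1)+1)}\le x^{1/2}$. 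Choosing $s>9\kappa+1+10\log C$ keeps the prefactor $1-e^{9\kappa+1-s}C^{10}$ in Theorem \ref{Iwan-Kow} bounded below by a positive constant, so Theorem \ref{Iwan-Kow} delivers $|S(B_K(x),(\Om_p);\Cal{P}^*)|\gg |B_K(x)|/(\log Q)^\kappa$; combining with Schanuel's bound $|B_K(x)|\gg x^{r+1}$ then finishes the proof.

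The main obstacle is the bookkeeping needed to translate between the prime ideals of $\Cal{O}_K$ (where the sifting sets $\Om_\mfrak{p}$ are naturally defined by $f$) and the rational primes that the Iwaniec--Kowalski framework requires. The CRT packaging handles the sifting sets themselves, but verifying \prettyref{eq:14} with a suitable $\kappa$ hinges on Mertens's theorem for $K$; once that is in place, Lemma \ref{lma-lbs} together with the range hypothesis on $Q$ controls the remainder essentially for free.
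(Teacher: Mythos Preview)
Your proposal is correct and follows essentially the same route as the paper: apply Theorem~\ref{Iwan-Kow} to $(B_K(x),\mu,F)$ with $\alpha\equiv 1$, bound $\nu_\mfrak{p}(\Om_\mfrak{p})\le \deg f/N(\mfrak{p})$, verify \prettyref{eq:14} via the CRT identity \prettyref{eq:CRT-1} and a Mertens-type estimate, and control $R^-$ using Lemma~\ref{lma-lbs} with $\mfrak{a}=m\Cal{O}_K$. The only noteworthy differences are presentational: you invoke Mertens directly for the number field on $\sum 1/N(\mfrak{p})$, whereas the paper first passes to the inequality $1-\nu_p(\Om_p)\ge(1-\deg f/p)^d$ over rational primes and then uses the classical Mertens estimate, obtaining the explicit value $\kappa=d\cdot\deg f$ that you leave implicit; and you spell out more carefully how conditions (1)--(3) force every $\mfrak{q}\mid p$ (for $p\in\Cal{P}^*$) to lie outside $S$, which the paper leaves tacit.
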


\begin{proof}
Let $W_\mfrak{p}$ denote the projective variety defined by $f=0$ over $\bb{F}_\mfrak{p}$, and let $q=|\bb{F}_\mfrak{p}|$.
Trivially, we have
\[
 |W_\mfrak{p}(\bb{F}_\mfrak{p})|\leq (\deg f) \frac{q^r-1}{q-1}.
\]
Since each point in $\bb{P}^r(\bb{F}_\mfrak{p})$ corresponds with $q-1$ points in $\bb{F}_\mfrak{p}^{r+1}$, we obtain
\begin{equation}\label{eq:11}
  \nu_\mfrak{p}(\Om_\mfrak{p})\leq \frac{\deg f}{N(\mfrak{p})}.
\end{equation}
Suppose that $\mfrak{p}\in S$ and lies over $p$.  Then by \prettyref{eq:CRT-1} we have
\[
 1-\nu_p(\Om_p)\geq\prod_{\mfrak{p}|p}\paren{1-\frac{\deg f}{|\bb{F}_\mfrak{p}|}}=\paren{1-\frac{\deg f}{p}}^d
\]
for $p>\deg f$, and it can be shown that
\[
 \prod_{w\leq p<Q}\paren{1-\frac{\deg f}{p}}^{-d}\ll\paren{\frac{\log Q}{\log w}}^{d\cdot \deg f}
\]
for $w>\deg f$, where the implied constant depends on the error term in the prime number theorem.  This is  sufficient
to obtain \prettyref{eq:14}, since we can adjust the constant to account for the finite number of factors with $p<\deg f$.
Since $|B_K(x)|\asymp |B_\Lam(x)|$ then by applying lemma \ref{lma-lbs} to $\mfrak{a}=d\Cal{O}_K$ and $S=\Om_d$, we obtain
\[
 |r_d(B_K(x);\alpha)|\ll_{K,r} \begin{cases}
				  x(\log x)D^2&\text{if }d=r=1,\\
				  x^r D^{d(r+1)}&\text{otherwise}.
                               \end{cases}
\]
For $D=Q^s$, this gives us
\[
 R^-(B_K(x);\alpha)\ll_{K,r} \begin{cases}
				  x(\log x )Q^{3s}&\text{if }d=r=1,\\
				  x^r Q^{s(d(r+1)+1)}&\text{otherwise},
                               \end{cases}
\]
which will be an acceptable error if $Q^{s(d(r+1)+1)}\leq x^\frac{1}{2}$.  The prime number theorem gives us
\[
 \prod_{p<Q}\paren{1-\frac{\kappa_1}{p}}\ll\frac{1}{(\log Q)^{d\deg f}},
\]
and so by applying theorem \ref{Iwan-Kow} with $\alpha(x)=1$, we obtain the result stated in the theorem with $\kappa=d\cdot \deg f$.
\end{proof}

\subsection{Application and an example}\label{subsec-eg}
As an application of the theorem just proven, consider the following corollary
\begin{cor} 
Let $V$ be an affine variety over $K$ birationally equivalent to $\bb{P}_K^r$, and let $A$ be an abelian variety defined
over the function field of $V$.  If we define
\[
 C(x)=\{t\in B_K(x): A_t\text{ has good reduction for all }\mfrak{p}\in\Cal{L}^*\}
\]
then there exist $s,\kappa$, such that
\[
 |C(x)|\gg_{K,r} \frac{x^{r+1}}{(\log x)^\kappa}.
\]
\end{cor}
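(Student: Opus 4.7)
The plan is to reduce the corollary to a direct application of Theorem \ref{thm-lbs}. I will produce a single homogeneous polynomial $f$ whose non-vanishing modulo $\mfrak{p}$ witnesses good reduction of $A_t$ at $\mfrak{p}$, and then feed $f$ into the lower bound sieve.

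First, I would spread out the abelian variety. Since $V$ has finite type over $K$ and $A$ is an abelian variety over the function field $K(V)$, there is a nonempty open $U\subset V$ carrying an abelian scheme $\Cal{A}\to U$ whose generic fiber recovers $A$. Using the birational equivalence $V\dashrightarrow \bb{P}_K^r$, transport the complement of $U$ (together with the indeterminacy locus of the birational map) to $\bb{P}_K^r$ and choose a homogeneous polynomial $f\in\Cal{O}_K[t_0,\ldots,t_r]$ whose vanishing locus in $\bb{P}_K^r$ contains it. Standard spreading-out then extends $\Cal{A}\to U$ to an abelian scheme over an integral model $\Cal{U}\subset \bb{P}^r_{\Cal{O}_K[1/\alpha]}$ for some $\alpha\in\Cal{O}_K$. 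Enlarge the set $S$ of Theorem \ref{thm-lbs} by adjoining every $\mfrak{p}\mid\alpha$; then for $\mfrak{p}\notin S$, a point $t\in V(K)$ with $f(t)\not\equiv 0\mod\mfrak{p}$ reduces into $\Cal{U}$ mod $\mfrak{p}$, and consequently $A_t$ acquires good reduction at $\mfrak{p}$.

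With $\Om_\mfrak{p}$ defined by \prettyref{eq:lbs1} via this $f$ and $\Cal{L}^*\subset \Sig_K^1(Q)\setminus S$, the inclusion
\[
 \{t\in B_K(x):\pi_\mfrak{p}(F(t))\notin\Om_\mfrak{p}\;\forall \mfrak{p}\in\Cal{L}^*\}\subset C(x)
\]
is immediate, so Theorem \ref{thm-lbs} yields $|C(x)|\gg_{K,r} x^{r+1}/(\log Q)^\kappa$ as long as $s$ is taken large enough and $Q^{s(d(r+1)+1)}\leq x^{1/2}$. Setting $Q=x^{1/(2s(d(r+1)+1))}$ gives $\log Q\asymp \log x$, turning the bound into $x^{r+1}/(\log x)^\kappa$ with the same exponent $\kappa$.

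The hard part will be the spreading-out step: showing that a single polynomial $f$ can be chosen to cut out both the indeterminacy locus of the birational map $V\dashrightarrow\bb{P}_K^r$ and the bad-reduction locus of $\Cal{A}$ after descent to $\bb{P}_K^r$, and that all residual exceptional primes can be absorbed into the finite set $S$ without affecting the asymptotics. Once this geometric input is in place, the corollary is a direct consequence of Theorem \ref{thm-lbs}.
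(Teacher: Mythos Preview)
Your proposal is correct and follows exactly the approach the paper intends: the corollary is stated in the paper without proof, as a direct application of Theorem~\ref{thm-lbs}, and the subsequent elliptic-curve example (homogenizing the discriminant $\Delta(t)$ and applying the sieve to the resulting $f$) illustrates precisely the spreading-out-to-a-polynomial step you spell out. Your substitution $Q=x^{1/(2s(d(r+1)+1))}$ to convert $(\log Q)^\kappa$ into $(\log x)^\kappa$ is also the intended move, matching the paper's remark that the $x$--$Q$ relationship forced by Theorem~\ref{thm-lbs} is not sharp.
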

Unfortunately, while the value of $\kappa$ sees to be sharp, the relationship between $x,Q$ as given by theorem \ref{thm-lbs} does not.
We illustrate this with the following example.

Let $V=\bb{A}^1_\bb{Q}$, and consider the elliptic curve
\[
 y^2=x^3+3(1-t)tx+2(1-t)^2t,
\]
over the function field $\bb{Q}(t)$. Then $\Delta(t)=-12^3(1-t)^3t^2$ and $j(t)=-12^3t$. Using asymptotic estimates for the Euler $\phi$-function, we find that
\[
 |B_K(x)|=\frac{12}{\pi^2}x^2+O(x\log x).
\]
If we homogenize $\Delta(t)$ by the substitution $t=\frac{t_1}{t_0}$, then we see that bad reduction can only occur at $p$ when $p=2,3$ or $p|t_1$ or $t_0-t_1$.
The $t_0,t_1$, are integers with absolute value $\leq x$, such that $t_1,t_0-t_1$ are not divisible by any prime $p\leq Q$. If we take $Q=x^\frac{1}{2}$,
the sieve of Eratosthenes shows that $|t_1|$ must be a prime  in the interval $(x^{\frac{1}{2}},x]$, and so it follows that 
\[
|C_K(x)|\gg \frac{x^2}{(\log x)^2}.
\]
On the other hand, applying \ref{thm-lbs} to the homogeneous polynomial $f(t_0,t_1)=6t_1(t_0-t_1)$ with $Q=x^\frac{1}{6s}$, we get this same
estimate except with the constant depending on $s$. But, since $s> 37 +10\cdot \log C$ and $C$ satisfies \prettyref{eq:14}, which in this case is a
consequence of the prime number theorem, this means that the value of $Q$ used in theorem \ref{thm-lbs} must be taken much smaller than $x^\frac{1}{2}$.

\section{General machinery}\label{sec2}
All definitions made in the previous section will be maintained, and we will supplement them with the following. For any group $G$,
the set of conjugacy classes will be denoted by $G^\#$.\nomenclature{$G^\#$}{the set of conjugacy classes of a group $G$}
If $V$ is a variety over a field $k$, then $\wbar{\eta}$ will denote a geometric point, and $\wbar{V}$ will denote the extension of
$V$ to a separable closure $k^\text{sep}$.

Now in particular if $f:U\to V$ is a finite \'etale covering over $k$ with arithmetic monodromy group $G$ and geometric monodromy group $G^g$,
then we get the following commutative diagram:
\begin{equation}\label{eq:15}
 \xymatrix{
	  &1\ar[d]					&1\ar[d]				&1\ar[d]			\\
  1\ar[r] & \pi_1(\wbar{U},\wbar{\xi})\ar[r]\ar[d]	& \pi_1(U,\wbar{\xi})\ar[r]\ar[d]	&\Gal(k^\sep /k)\ar[r]\ar[d]	&1\\
  1\ar[r] & \pi_1(\wbar{V},\wbar{\eta})\ar[r]\ar[d]	& \pi_1(V,\wbar{\eta})\ar[r]\ar[d]^\rho	&\Gal(k^\sep /k)\ar[r]\ar[d]	&1\\
  1\ar[r] & G^g\ar[r]\ar[d]				& G\ar[r]^m\ar[d]				& G/G^g\ar[r]\ar[d]		&1\\
	  &1						&1					&1
}
\end{equation}
The important point is that we must consider this diagram both where $k$ is a number field $K$,
and where $k$ is a finite field $\bb{F}_\mfrak{p}$, which we obtain by reducing mod $\mfrak{p}$
for some $\mfrak{p}\in \Sig_K$. Moreover, we will have a family of coverings $f_l:V_l\to V$ over $K$,
parametrized by $l$, where $l$ is a prime. In this case, we will denote the corresponding monodromy
groups by $G_l$\nomenclature{$G_l$}{the arithmetic monodromy group associated with the covering $V_l\to V$}
and $G_l^g$ respectively, and the map $\rho$ will be renamed $\rho_l$.  For each $l$, when we reduce
mod $\mfrak{p}$ to obtain the analogous situation over $\bb{F}_\mfrak{p}$, we will also introduce
$\mfrak{p}$ as a subscript: $G_{l,\mfrak{p}}$, $G_{l,\mfrak{p}}^g$
\nomenclature{$G_{l,\mfrak{p}}$}{the arithmetic monodromy group associated with the covering $V_{l,\mfrak{p}}\to V_\mfrak{p}$}
and $\rho_{l,\mfrak{p}}$. \nomenclature{$V_\mfrak{p}$}{the reduction of a variety $V$ mod $\mfrak{p}$}
The question arises, with $l$ fixed, when do we have $G_l\cong G_{l,\mfrak{p}}$ or similarly $G_l^g\cong G_{l,\mfrak{p}}^g$.
This may not happen for all $\mfrak{p}$, but in our application we manage to get sufficient control over this.

\subsection{The Chebotarev density theorem}
The following theorem is based on a version of the Chebotarev density theorem originally due to Lang \cite{Lang2},
which we sharpen with the help of a recent result by Kowalski \cite{Kow1}.  Since it applies to more general situations
than the one here, we will state it for a single Galois covering $f:U\to V$, and thus we will omit the subscripts $l,\mfrak{p}$.

\begin{thm}\label{thm-Lang}
Let $V$ be a smooth geometrically connected affine variety over $\bb{F}_q$ of dimension $r\geq 1$, and let $f: U\to  V$ be a finite
\'etale covering, with arithmetic Galois group $G$ such that $(|G|, q)=1$.  Let $\gamma$ be the image of $-1$
under the vertical map $\Gal(k^\sep /k)\to G/G^g$ in diagram \prettyref{eq:15}, where $k=\bb{F}_q$ in this case,
and let $C\in G^\#$ be arbitrary. \nomenclature{$C$}{a conjugacy class of a group $G$}  If we define
\[
 \Om_C(n) =\{t\in V(\bb{F}_{q^n}): \rho(\frob_t)\in C \},
\]
then \nomenclature{$\Om_C(n)$}{$\{t\in V(\bb{F}_{q^n}): \rho(\frob_t)\in C \}$}
\begin{equation}\label{eq:2}
 \abs{\frac{|\Om_C(n)|}{q^{nr}}-\frac{|C\cap m^{-1}(\gamma^n)|}{|G^g|}}\ll |G^g|^\frac{1}{2}|G^{g\,\#}|^\frac{1}{2}|C|q^{-\frac{n}{2}}
\end{equation}
where the implied constant depends only on $\wbar{V}$, and in particular it does not depend on $q$ or on $G$.
\end{thm}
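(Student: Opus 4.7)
The plan is to follow the classical étale-cohomological strategy of Deligne--Katz--Lang, organized around the Grothendieck--Lefschetz trace formula and Weil II, and to use Kowalski's bounds on sums of Betti numbers of lisse sheaves in order to sharpen the constant to the form $|G^g|^{1/2}|G^{g\,\#}|^{1/2}|C|$.

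First, fix a prime $\ell$ with $\ell\nmid q\cdot|G|$. For each irreducible $\overline{\mathbb{Q}}_\ell$-representation $\pi$ of $G$, compose with $\rho:\pi_1(V,\wbar\eta)\to G$ to obtain a lisse $\overline{\mathbb{Q}}_\ell$-sheaf $\mathcal{F}_\pi$ on $V$ of rank $\dim\pi$. Fix $c\in C$ and decompose the class function $1_C$ on $G$ via the orthogonality relations:
\[
 1_C(g)=\frac{|C|}{|G|}\sum_{\pi\in\Irr(G)}\overline{\chi_\pi(c)}\,\chi_\pi(g).
\]
Applied to $g=\rho(\frob_t)$ and summed over $t\in V(\bb{F}_{q^n})$, Grothendieck--Lefschetz gives
\[
 |\Om_C(n)| \;=\; \frac{|C|}{|G|}\sum_{\pi}\overline{\chi_\pi(c)}\sum_{i=0}^{2r}(-1)^i\tr\bigl(\frob_q^n \,\big|\, H^i_c(\wbar V,\mathcal{F}_\pi)\bigr).
\]

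Next, separate the main term. Only representations $\pi$ that are trivial on $G^g$ contribute nontrivially to $H^{2r}_c$, since for such $\pi$ one has $H^{2r}_c(\wbar V,\mathcal{F}_\pi)\cong(\pi)_{G^g}(-r)=\pi(-r)$, and this group vanishes otherwise by geometric connectedness of $\wbar V$. For $\pi$ trivial on $G^g$, viewed as a character of the cyclic group $G/G^g$, Frobenius acts on $H^{2r}_c(\wbar V,\mathcal{F}_\pi)$ by $\chi_\pi(\gamma)\cdot q^r$. Summing and using that $G/G^g$ is cyclic (hence its conjugacy classes are singletons) and a second orthogonality relation on $G/G^g$, the main term collapses to $\frac{|C\cap m^{-1}(\gamma^n)|}{|G^g|}q^{nr}$, matching the expected density.

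Now I would bound the error. For $\pi$ not trivial on $G^g$, $H^{2r}_c(\wbar V,\mathcal{F}_\pi)=0$. By Deligne's Weil II, the eigenvalues of $\frob_q^n$ on $H^i_c(\wbar V,\mathcal{F}_\pi)$ for $i<2r$ have complex absolute value at most $q^{ni/2}$, so each such trace is bounded by $\dim H^i_c(\wbar V,\mathcal{F}_\pi)\cdot q^{n(2r-1)/2}$. Here Kowalski's theorem enters: one has a bound $\sum_i \dim H^i_c(\wbar V,\mathcal{F}_\pi)\leq \sigma(\wbar V)\cdot\dim\pi$ with $\sigma(\wbar V)$ depending only on the geometry of $\wbar V$. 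Thus the total contribution of the lower cohomology is
\[
 \ll_{\wbar V}\; \frac{|C|}{|G|}q^{n(2r-1)/2}\sum_{\pi\in\Irr(G)}|\chi_\pi(c)|\,\dim\pi.
\]

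The final step is a Cauchy--Schwarz argument, which is also where I expect the main bookkeeping effort. Since every irreducible $\pi$ of $G$ restricted to $G^g$ is a sum of $G/G^g$-conjugate irreducibles of $G^g$, one rewrites the inner sum in terms of $\Irr(G^g)$ and applies
\[
 \sum_{\sigma\in\Irr(G^g)}\dim\sigma \;\leq\; \bigl(|\Irr(G^g)|\bigr)^{1/2}\Bigl(\sum_\sigma(\dim\sigma)^2\Bigr)^{1/2} \;=\; |G^{g\,\#}|^{1/2}|G^g|^{1/2},
\]
together with $|\chi_\pi(c)|\leq[G:G^g]^{1/2}\dim\sigma$ controlled through Clifford theory. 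Combining and normalizing by $q^{nr}$ yields the stated bound
\[
 \abs[\Big]{\frac{|\Om_C(n)|}{q^{nr}}-\frac{|C\cap m^{-1}(\gamma^n)|}{|G^g|}} \ll_{\wbar V} |G^g|^{1/2}|G^{g\,\#}|^{1/2}|C|\,q^{-n/2}.
\]
The main obstacle is the final bookkeeping: one must carefully track that the implied constant depends only on $\wbar V$, and verify that the Clifford-theoretic comparison of characters of $G$ and $G^g$ is compatible with Kowalski's Betti-number bound so that no factor of $|G|$ or $[G:G^g]$ survives beyond the stated form.
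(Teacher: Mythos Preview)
Your proposal is correct and follows essentially the same strategy as the paper: character orthogonality, the Grothendieck--Lefschetz formula, Deligne's Weil II, Kowalski's Betti-number bound, and a final Cauchy--Schwarz. The one organizational difference worth noting is how the passage from $\Irr(G)$ to $\Irr(G^g)$ is handled. You keep the full sum over $\Irr(G)$ and defer the comparison to a Clifford-theory step at the end, which you correctly flag as the delicate bookkeeping. The paper instead does this upfront: it groups $\Irr(G)$ into equivalence classes under twisting by characters $\psi\circ m$ of $G/G^g$, picks a set of representatives $\Pi$, and then the inner sum $\sum_{\psi}\psi(\gamma^n)\overline{\psi(m(g))}$ is exactly the orthogonality relation on the cyclic group $G/G^g$ that isolates the main term. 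What remains is a sum over $\pi\in\Pi$ only, and since $|\Pi|\le|G^{g\,\#}|$ and $\sum_{\pi\in\Pi}(\dim\pi)^2\le|G^g|$, a single Cauchy--Schwarz on $\sum_{\pi\in\Pi}\dim\pi$ gives the stated shape $|G^g|^{1/2}|G^{g\,\#}|^{1/2}$ with no further Clifford-theoretic manipulation. This twist-orbit trick is equivalent to your Clifford-theory endgame but packages it more cleanly and removes the risk of a stray $[G:G^g]$ factor surviving; you may find it simplifies exactly the step you identified as the main obstacle.
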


\begin{proof}
Let $\Irr(G),\Irr(G/G^g)$ denote the irreducible representations of $G$ and $G/G^g$ respectively. Two
representations $\pi,\pi'\in \Irr(G)$ are equivalent when restricted to $G^g$  if there exists
$\psi \in \Irr(G/G^g)$ such that
\[
 \pi=\pi'\otimes(\psi\circ m).
\]
Let $\Pi$ denote a system of representatives with respect to this equivalence relation.  Then, if we sum over all
$\pi\in \Irr(G)$, we can break the sum up into a sum over $\psi\in \Irr(G/G^g)$ and a sum over $\pi\in \Pi$.
Note that $G/G^g$ is cyclic, and hence all irreducible representations of $G/G^g$ are characters. Using
these ideas together with orthogonality of characters gives us the following:
\begin{align*}
 |\Om_C||G|&=\sum_{g\in C}\sum_{t\in V(\bb{F}_{q^n})}\sum_{\pi\in \Irr(G)} \tr(\pi)(\rho(\frob_t))\wbar{\tr(\pi)(g)}\\
	   &=\sum_{g\in C}\sum_{t\in V(\bb{F}_{q^n})}\sum_{\pi\in \Pi}
	  \sum_{\psi\in \Irr(G/G^g)} \tr(\pi\otimes (\psi\circ m))(\rho(\frob_t))\wbar{\tr(\pi\otimes (\psi\circ m))(g)}\\
	   &=\sum_{\pi\in \Pi}\sum_{g\in C}\sum_{t\in V(\bb{F}_{q^n})} \tr(\pi)(\rho(\frob_t))\wbar{\tr(\pi)(g)}
				    \sum_{\psi\in \Irr(G/G^g)} \psi((m\circ \rho)(\frob_t))\wbar{\psi(g)}.
\end{align*}
Since the image of $\frob_t$ in $\Gal(\wbar{\bb{F}_q}/\bb{F}_q)$ is $-n$, by commutativity of the diagram \prettyref{eq:15}
we have $(m\circ\rho)(\frob_t)=\gamma^n$, and by using orthogonality of characters again, this means that
\[
 \sum_\psi \psi((m\circ \rho)(\frob_t))\wbar{\psi(g)}=\begin{cases}
                                                       |G/G^g| &\text{if } g\in C\cap m^{-1}(\gamma^n)\\
                                                        0      &\text{otherwise.}
                                                      \end{cases}
\]
By applying this to the summation above, we obtain
\[
 |\Om_C||G|=|V(\bb{F}_{q^n})||C\cap m^{-1}(\gamma^n)||G/G^g|
	     +\mathop{\sum_{\pi\in\Pi}}_{\pi\neq 1}\sum_{g\in C\cap m^{-1}(\gamma^n)}
	     \sum_{t\in V(\bb{F}_{q^n})} \tr(\pi)(\rho(\frob_t))\wbar{\tr(\pi)(g)},
\]
and hence we have the inequality
\[
 \abs{\frac{|\Om_C|}{|V(\bb{F}_{q^n})|}-\frac{|C\cap m^{-1}(\gamma^n)|}{|G^g|}}
 \leq \frac{|C|}{|G||V(\bb{F}_{q^n})|}\sum_{\pi\neq 1}\abs{\sum_{t\in V(\bb{F}_{q^n})} \tr(\pi)(\rho(\frob_t))}.
\]
If we apply proposition 5.1 (i) in \cite{Kow1} with $l=l',\pi'=1$, then we obtain
\[
\mathop{\sum_{\pi\in\Pi}}_{\pi\neq 1}\abs{\sum_{t\in V(\bb{F}_{q^n})}
\tr(\pi)(\rho(\frob_t))}\leq C(\wbar{V})q^{rn-\frac{1}{2}}|G|\mathop{\sum_{\pi\in\Pi}}_{\pi\neq 1}\dim \pi,  
\]
where $C(\wbar{V})$ is a constant depending only on $\wbar{V}$. Then by using Cauchy's inequality, 
we get the result in the statement of the theorem.
\end{proof}

\subsection{The general theorem}
Let $V$ be a geometrically irreducible affine variety over $K$ of dimension $r\geq 1$, so that it can be extended to a scheme 
over $\spec \Cal{O}_K[\frac{1}{\alpha}]$.  Also suppose that $V$ is birationally equivalent to $\bb{P}_K^r$
via the rational map $\vphi:V\to\bb{P}_K^r$, and let
\begin{equation}\label{eq:9}
  B_K(x)=\{t\in V(K):\exists \ang{U,\vphi}\text{ such that }t\in U(K)\text{ and }H_\vphi(t)\leq x\},
\end{equation}
where $\ang{U,\vphi}$ is a pair defining this rational map, and $H_\vphi$ denotes the corresponding absolute height on $U$.
For each prime $l$, let $V_l$ be an affine variety over $K$, such that
\smallskip

\begin{enumerate}
 \item $V_l$ can be extended to a scheme over $\spec\Cal{O}_K[\frac{1}{\alpha_l}]$ \label{cond:1}.
 \item $f_l:V_l\to V$ is finite \'etale over $\spec\Cal{O}_K[\frac{1}{\alpha_lg(x_1,\ldots, x_r)},x_1,\ldots, x_r]$.\label{cond:2}
 \item $G_l/G_l^g$ is isomorphic to a subgroup of $\bb{F}_l^\times$ where $G_l,G_l^g$ respectively denote the arithmetic and
       geometric Galois groups of the covering $V_l\to V$ when working over $K$.
 \item by extending scalars from $K$ to $K(\zeta_l)$, where $\zeta_l$ is a primitive $l$ root of unity,
       each connected component of $V_l$ is geometrically irreducible.\label{cond:4}
\end{enumerate}
\smallskip

Here, we introduce the function $g$ since it may not be possible to obtain the finite \'etale condition over
$\spec\Cal{O}_K[\frac{1}{\alpha_l},x_1,\ldots, x_r]$ itself.  This already happens in the case of elliptic curves,
as will be seen later. However, at least for our application it is not necessary to let $g$ vary with $l$, and since
causes the estimates to become considerably more complicated, we assume that $g$ is the same for all $l$.

\noindent We will need some control over $|G_l^g|$,$|G_l^{g\,\#}|$, and $\alpha_l$.  This is accomplished by the following properties.
\begin{prty}\label{prty1}
There exist constants $\beta_1,\beta_2$ such that
\[
 |G_l|\ll l^{\beta_1}\text{ and }|G_l^\#|\ll l^{\beta_2}
\]
where the implied constants are absolute.
\end{prty}
Obviously the condition $|G_l^\#|\ll l^{\beta_2}$ is not necessary in property \ref{prty1}, however,
in our application it is possible to take $\beta_2$ smaller than $\beta_1$, giving us a sharper bound.

\begin{prty}\label{prty3}
There exists a constant $\beta_3$, such that
\[
 |\{\mfrak{p}\in \Sig_K:\alpha_l\in\mfrak{p}\text{ or }G_{l,\mfrak{p}}^g\ncong G_l^g\}|\ll l^{\beta_3}
\]
holds for all $l$, where the implied constant is absolute.
\end{prty}
Since $V$ does not depend on $l$, then by working over $\spec{O}_K[\frac{1}{\alpha}]$ theorem 9.7.7 of \cite{EGA4_3} shows that there are only a
finite number of $\mfrak{p}$ for which $V_\mfrak{p}$ is not geometrically irreducible, but it is too soft to be of much help with $V_l$.  Instead, we rely
on some known facts about moduli spaces of abelian varieties to get this property to hold with $\beta_3=1$. 

If $t\in V(K)$, then $G_K=\Gal(\wbar{K}/K)$ \nomenclature{$G_K$}{$\Gal(\wbar{K}/K)$} acts on the fiber $V_{l,t}$,\nomenclature{$V_{l,t}$}{the fiber of $V_l$ over $t$}
which gives us a homomorphism $\rho_{l,t}: G_K \to G_{l,t}$.  To determine the $t$ for which the image contains $G_l^g$, we use the following lemma of Jordan (see \cite{Ser3}):
\begin{lma}[Jordan]\label{lmaB}
 Given a finite group $G$ and a subgroup $H\subset G$, if $H\cap C \neq\emptyset$ for every $C\in G^\#$, then $H=G$.
\end{lma}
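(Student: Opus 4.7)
The plan is to prove the contrapositive: if $H$ is a proper subgroup of $G$, then there exists a conjugacy class $C \in G^\#$ such that $H \cap C = \emptyset$. Equivalently, I want to show that when $H \subsetneq G$, the union of all conjugates $U := \bigcup_{g \in G} gHg^{-1}$ is a proper subset of $G$, since $U$ is exactly the union of the conjugacy classes that meet $H$.

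First I would count the distinct conjugates of $H$. Two conjugates $gHg^{-1}$ and $g'Hg'^{-1}$ coincide if and only if $g^{-1}g' \in N_G(H)$, so there are precisely $[G:N_G(H)]$ distinct conjugates of $H$ in $G$. Since $H \leq N_G(H)$, we have $[G:N_G(H)] \leq [G:H]$.

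Next I would use inclusion-exclusion crudely. Each conjugate $gHg^{-1}$ contains the identity element, so while naively $|U| \leq [G:N_G(H)] \cdot |H|$, the identity is counted in every conjugate, giving the sharper bound
\[
|U| \leq [G:N_G(H)] \cdot (|H| - 1) + 1 \leq [G:H] \cdot (|H| - 1) + 1 = |G| - [G:H] + 1.
\]
If $H \subsetneq G$, then $[G:H] \geq 2$, hence $|U| \leq |G| - 1 < |G|$. Therefore some element of $G$ lies outside every conjugate of $H$, and the conjugacy class of that element is disjoint from $H$, contradicting the hypothesis.

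I do not expect any real obstacle here; the only subtlety is remembering to use the identity-sharing of conjugates to get strict inequality. The argument is entirely elementary group theory and does not require anything from the sieve-theoretic setup developed earlier in the paper.
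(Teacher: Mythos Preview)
Your proof is correct and is in fact the classical argument for Jordan's lemma. Note that the paper does not give its own proof of this statement: it simply attributes the lemma to Jordan and refers the reader to Serre \cite{Ser3}. So there is nothing to compare against beyond observing that your argument is the standard one found in the literature.
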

In light of this lemma, the strategy is as follows. For each $l$, and each conjugacy class, we apply theorem \ref{thm-sieve} with $\Cal{L}^*$ taken to be a suitable subset
of $\Sig_K^1(Q;1,l)$. This definition accomplishes a couple things.  First, it means that constant in the large sieve does not depend on $l$ because
the sieve setting and $\vphi$ remain the same for all $l$.  Second, by Artin reciprocity we know that if $\mfrak{p}\in \Cal{L}^*$, then $\mfrak{p}$ splits over $K(\zeta_l)$,
and hence if $\mfrak{P}$ is a prime above $\mfrak{p}$ in $\Sig_{K(\zeta_l)}$, then regardless of whether we reduce $V_l\to V$ mod $\mfrak{p}$ or mod $\mfrak{P}$,
we end up with a covering defined over $\bb{F}_p$.  This makes it possible to define the sets $\Om_\mfrak{p}$. However, a side effect is that it is impossible to
obtain information about $G_l$ itself by working over $\bb{F}_p$, so in particular this forces us to apply
Jordan's lemma to $G_l^g$.

In our application, we will also need an effective version of Hilbert irreducibility for a single value of $l$, so we state it in the following lemma:
\begin{lma}[Hilbert irreducibility]
\label{H-Irred-lma}
 Let $K$ be a number field, and let $V$ be a smooth geometrically connected affine variety over $K$, birationally equivalent to
 $\bb{P}_K^r$ for some $r\geq 1$. Fix a prime $l$, and let $V_l\to V$ be a covering satisfying conditions \ref{cond:1}-\ref{cond:4} above.
 If we define
 \nomenclature{$E_{K,l}(x)$}{$\{t\in B_K(x):G_l^g\not\subset\rho_{l,t}(G_K)\}$} 
\[
 E_{K,l}(x)=\{t\in B_K(x):G_l^g\not\subset\rho_{l,t}(G_K)\},
\]
then for sufficiently large $x$ depending on $l$, we have
\begin{equation}\label{eq:13}
  \frac{|E_{K,l}(x)|}{|B_K(x)|}\ll_{\vphi,K,r}|G_l^g||G_l^{g\,\#}|\cdot l\frac{\log x}{x^\frac{1}{2}}.
\end{equation}
\end{lma}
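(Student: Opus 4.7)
I would combine Jordan's lemma (Lemma \ref{lmaB}) with the Chebotarev density theorem (Theorem \ref{thm-Lang}) and the large sieve (Theorem \ref{thm-sieve}), sieving once for each conjugacy class of $G_l^g$. First, apply Lemma \ref{lmaB} to the subgroup $\rho_{l,t}(G_K)\cap G_l^g$ inside $G_l^g$: if $t\in E_{K,l}(x)$, this subgroup is proper, so it is disjoint from some $G_l^g$-conjugacy class $C$. A union bound reduces the problem to proving that each of the $|G_l^{g\,\#}|$ sets
\[
 E_C := \{t\in B_K(x) : \rho_{l,t}(G_K)\cap C = \emptyset\}
\]
has size $\ll_{\vphi,K,r} |B_K(x)|\cdot l\,|G_l^g|\,(\log x)/x^{1/2}$.

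Fix $Q = x^{1/2}$ and take $\Cal{L}^* \subset \Sig_K^1(Q;1,l)$ to be those $\mfrak{p}$ whose norm exceeds a threshold depending on $l$, at which $V_l\to V$ has good reduction and $G_{l,\mfrak{p}}^g \cong G_l^g$. Property \ref{prty3} ensures that all but $O(l)$ of the primes in $\Sig_K^1(Q;1,l)$ qualify, and Chebotarev for $\bb{Q}(\zeta_l)/\bb{Q}$ yields $|\Cal{L}^*|\gg Q/(l\log Q)$. The condition $N(\mfrak{p})\equiv 1 \pmod{l}$ forces $\mfrak{p}$ to split in $K(\zeta_l)$, so by Artin reciprocity the image of Frobenius in $G_l/G_l^g$ is trivial; thus $\gamma=1$ in Theorem \ref{thm-Lang} and $G_{l,\mfrak{p}} = G_{l,\mfrak{p}}^g = G_l^g$. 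For each class $C$ I define
\[
 \Om_\mfrak{p}^C := \pi_\mfrak{p}\bigl(\{u\in B_K(x) : \rho_{l,\wbar{u},\mfrak{p}}(\frob_{\wbar{u}})\in C\}\bigr)\subset Y_\mfrak{p},
\]
which is well-defined since $C$ is conjugation-invariant in $G_{l,\mfrak{p}}$. Theorem \ref{thm-Lang} gives
\[
 \nu_\mfrak{p}(\Om_\mfrak{p}^C) \;=\; \frac{|C|}{|G_l^g|} + O\bigl(|G_l^g|^{1/2}|G_l^{g\,\#}|^{1/2}|C|\,N(\mfrak{p})^{-1/2}\bigr),
\]
and once $x$ (hence $N(\mfrak{p})$) is large compared to $|G_l^g|^3|G_l^{g\,\#}|$ the main term dominates, so $\nu_\mfrak{p}(\Om_\mfrak{p}^C) \gg 1/|G_l^g|$.

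Every $t\in E_C$ lies in the sifted set $S(B_K(x),(\Om_\mfrak{p}^C);\Cal{L}^*)$: excluding the finitely many $\mfrak{p}$ at which $t$ has bad reduction, $\rho_{l,t}(\frob_\mfrak{p})\in G_l^g\setminus C$ for every remaining $\mfrak{p}\in\Cal{L}^*$, hence $\pi_\mfrak{p}(F(t))\notin\Om_\mfrak{p}^C$. Feeding this into Theorem \ref{thm-sieve} in its absolute-height form, together with the single-prime lower bound
\[
 L(Q) \;\geq\; \sum_{\mfrak{p}\in\Cal{L}^*}\frac{\nu_\mfrak{p}(\Om_\mfrak{p}^C)}{1-\nu_\mfrak{p}(\Om_\mfrak{p}^C)} \;\gg\; \frac{Q}{l\,|G_l^g|\log Q},
\]
yields $|E_C|\ll_{\vphi,K,r} x^{r+1}\cdot l\,|G_l^g|\,(\log x)/x^{1/2}$. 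Dividing by $|B_K(x)|\asymp_K x^{(r+1)[K:\bb{Q}]}$ (Schanuel) and summing over the $|G_l^{g\,\#}|$ classes gives the claimed bound.

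The hardest aspect is the interplay between the global monodromy groups $G_l, G_l^g$ and their mod-$\mfrak{p}$ counterparts: one needs the identification $G_{l,\mfrak{p}} = G_l^g$ at every sieving prime, so that a $G_l^g$-conjugacy class is an intrinsic sieving datum on the reduced covering, together with control on the exceptional primes (where $\alpha_l\in\mfrak{p}$ or $G_{l,\mfrak{p}}^g\not\cong G_l^g$). Both are supplied by Property \ref{prty3} and the splitting of $\mfrak{p}$ in $K(\zeta_l)$. The remaining work is bookkeeping of the various constants and verifying that the "$x$ sufficiently large in terms of $l$" hypothesis absorbs the Chebotarev error term.
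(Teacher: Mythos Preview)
Your outline is essentially the paper's own argument: Jordan's lemma reduces to a union over $C\in G_l^{g\,\#}$, and for each $C$ you feed the function-field Chebotarev bound (Theorem~\ref{thm-Lang}) into the large sieve (Theorem~\ref{thm-sieve}) with $Q=x^{1/2}$ and $\Cal{L}^*\subset\Sig_K^1(Q;1,l)$, using the splitting of $\mfrak{p}$ in $K(\zeta_l)$ to force $\gamma=1$ so that $C$ is a well-defined sieving datum on the reduced covering.

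One correction: you invoke Property~\ref{prty3} to bound the number of exceptional $\mfrak{p}$, but Property~\ref{prty3} is \emph{not} among the hypotheses of this lemma (only conditions \ref{cond:1}--\ref{cond:4} are assumed). The paper handles this differently: since $l$ is fixed here, Grothendieck's theorem (EGA~IV$_3$, 9.7.7) already shows that the set $\{\mfrak{p}:G_{l,\mfrak{p}}^g\not\cong G_l^g\}$ is finite, so $|\Sig_K^1(Q;1,l)|-|\Cal{L}^*|$ is bounded by a constant depending on $l$, and Siegel--Walfisz then gives $|\Cal{L}^*|\gg_K x^{1/2}/(l\log x)$ once $x$ is large enough in terms of $l$. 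Property~\ref{prty3} is reserved for Theorem~\ref{general-thm}, where $l$ varies and one needs the exceptional set to grow at most polynomially in $l$. Also watch the height normalization in your last step: the large-sieve numerator and $|B_K(x)|$ must carry the same power of $x$ (both $x^{(r+1)[K:\bb{Q}]}$ in the absolute-height convention), so that only the factor $l\,|G_l^g|(\log x)/x^{1/2}$ survives after division; as written, your $x^{r+1}$ and $x^{(r+1)[K:\bb{Q}]}$ do not match.
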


\begin{rmk*}
Following the same argument used for Serre's Proposition 19 in \cite{Ser1}, we can get surjectivity of the map
$\rho_{l,t}:G_K\to G_l$ from $G_l^g\subset\rho_{l,t}(G_K)$ together with surjective of the map $m$ in diagram \prettyref{eq:15}.
As a result, it is in fact true that
\[
 E_{K,l}(x)=\{t\in B_K(x):\rho_{l,t}(G_K)\subsetneq G_l\}.
\]
It is also important to realize that the constant $C(\wbar{V})$ from the Chebotarev Density Theorem for function fields does not
affect the constant in \prettyref{eq:13}, rather it affects what we mean by ``sufficiently large $x$. This is accomplished by the
definition of $\Cal{L}^*$.  If we were to let $g$ vary with $l$ in condition \ref{cond:2} above, then $C(\wbar{V})$ would also
vary with $l$, and we would need to assume subexponential growth in both cases to get anything useful.
\end{rmk*}

\begin{proof}
Since the absolute height is used in the definition of $B_K(x)$ we use the adjustment to the large sieve made
in the remark just below theorem \ref{thm-sieve}.  Let $U_l$ be a connected component of $V_l$ over $K(\zeta_l)$,
so that it is geometrically irreducible, and let
\[
 \Cal{L}^*=\{\mfrak{p}\in\Sigma_K^1(Q;1,l): \alpha_l\notin \mfrak{p},\,
						      N(\mfrak{p})\gg \max\{|G_l^g|^3|G_l^{g\,\#}|,|G_l^g|\},
						      \text{ and }G_{l,\mfrak{p}}^g\cong G_l^g\}.
\]
If $\mfrak{p}\in\Cal{L}^*$ and $\mfrak{P}$ is a prime over $\mfrak{p}$ in $\Sig_{K(\zeta_l)}$, reducing mod $\mfrak{P}$ gives us a covering
$U_{l,\mfrak{p}}\to V_\mfrak{p}$ with Galois group isomorphic to $G_l^g$.  In particular this means both $U_l,V$ remain geometrically irreducible.
Since $V_l\to V$ is \'etale over $\spec\Cal{O}_K[\frac{1}{\alpha_lg(x_1,\ldots, x_r)},x_1,\ldots, x_r]$, then we get an \'etale
covering over $\bb{F}_p$ by removing the points satisfying the homogeneous polynomial equation $g(t_0,t_1,\ldots t_r)\equiv 0 \mod p$, obtained
by the substitution $x_1=\frac{t_1}{t_0},\ldots x_r=\frac{t_r}{t_0}$.  The inequality \prettyref{eq:11} gives us an estimate for the number of
$\bb{F}_p$-points being removed to obtain an \'etale covering, and once this has been done, theorem \ref{thm-Lang} applies.

So, now let $C\in G_l^{g\,\#}$ be arbitrary, and for all $\mfrak{p}\in \Cal{L}^*$ define
\[
 \Omega_{\mfrak{p},C}=\{t\in V(\bb{F}_\mfrak{p}): \rho_{l,\mfrak{p}}(\frob_t)\in C \}.
\]
Strictly speaking this is not a subset of $(\Lam/\mfrak{p}\Lam)^{r+1}$, but by first mapping into $\bb{P}^r$ with the help of $\vphi$, we
can then map the points of $\Omega_{\mfrak{p},C}$ directly into $(\Lam/\mfrak{p}\Lam)^{r+1}$.  Since we only need a lower bound, it is
sufficient to obtain one for the set just defined.

We apply the large sieve with this data, and we want to estimate the right hand side of \prettyref{eq:4} in a useful way,
with the help of the Chebotarev density theorem. Specifically, using \prettyref{eq:2}, we get the following result 
\begin{equation}\label{eq:1}
 L(Q)\geq \sum_{\mfrak{p}\in \Cal{L}^*}\nu_\mfrak{p}(\Om_{\mfrak{p},C})\gg \sum_{\mfrak{p}\in\Cal{L}^*}\frac{|C|}{|G_l|}.
\end{equation}
The implied constant does not depend on $l$, in fact we can take the constant to be $\frac{1}{2}$, which is possible if we take the 
constant in $N(\mfrak{p})\gg \max\{|G_l^g|^3|G_l^{g\,\#}|,|G_l^g|\}$ to be $\max\{(4C(\wbar{V}))^2,4\}$,
where $C(\wbar{V})$ is the constant in \prettyref{eq:2}.

We now must obtain an underestimate for $|\Cal{L}^*|$.  Of course $\Cal{L}^*$ could be empty, but by choosing $Q$ and $x$
sufficiently large this will not happen.  However, since the definition of $\Cal{L}^*$ depends on $l$, what we mean by sufficiently
large will also depend on $l$, even though the implied constant we get will not depend on $l$. In the context of this lemma $l$ is fixed,
so Grothendieck's theorem 9.7.7 of \cite{EGA4_3} shows that the set of $\mfrak{p}\in\Sig_K$ for which $G_{l,\mfrak{p}}^g\ncong G_l^g$ is finite,
and hence the difference
\[
 |\Sig_K^1(Q;1,l)|-|\Cal{L}^*|
\]
is bounded.  This means that we can use Siegel-Walfisz theorem to get an estimate for $|\Cal{L}^*|$. Specifically, since $Q=x^\frac{1}{2}$,
then we have
\[
 |\Cal{L}^*|\gg_K \frac{x^\frac{1}{2}}{l\cdot\log x},
\]
for $x$ large enough that $\Cal{L}^*$ is non-empty. By applying this to \prettyref{eq:1} we get the following
underestimate for $L(Q)$,
\[
 L(Q)\gg_K \frac{|C|}{|G_l^g|}\frac{x^\frac{1}{2}}{l\cdot \log x},
\]
and then using this estimate of $L(Q)$ together with \prettyref{eq:3} gives us the following upper bound for the sifted set
\begin{equation}\label{eq:8}
 |S(B_K(x),(\Om_{\mfrak{p},C});\Cal{L}^*)|\ll_{\vphi,K,r} \frac{|G_l^g|}{|C|}\cdot l \frac{\log x}{x^\frac{1}{2}}x^{r+1}.
\end{equation}
In the future we will use the following shorthand for the sifted set, to make the remaining argument easier to follow
\[
 Y_C(x)=S(B_K(x),(\Om_{\mfrak{p},C});\Cal{L}^*).
\]
Since $V$ is a rational variety, we have $B_K(x)\asymp x^{r+1}$, and so by \prettyref{eq:8}
\begin{equation}\label{eq:7}
 \frac{|Y_C(x)|}{|B_K(x)|}\ll_{\vphi,K,r} \frac{|G_l^g|}{|C|}\cdot l \frac{\log x}{x^\frac{1}{2}}
\end{equation}
for any conjugacy class $C$ of $G_l^g$. Now by lemma \ref{lmaB} $E_{K,l}(x)\subset \bigcup_{C\in G_l^{g\,\#}} Y_C (x)$, so
\begin{equation}\label{eq:6}
 \frac{|E_{K,l}(x)|}{|B_K(x)|}\leq \sum_{C\in G_l^{g\,\#}} \frac{|Y_C(x)|}{|B_K(x)|}.
\end{equation}
By combining these last two estimates, and using the trivial bound $|C|\geq 1$ for each conjugacy class, we obtain the estimate
given in the lemma.
\end{proof}

\begin{rmk*}
Grothendieck's theorem 9.7.7 is proven in such a soft way, that it is not clear if it gives any control over the size of the finite set
of bad primes as $l$ varies.  However, if this set grows only as a power of $l$, then $x$ can be chosen sufficiently large so that the
lemma works simultaneously for all $l$  less than some power of $\log x$.  This is the idea behind property~\ref{prty3}.
For an effective result, we can adjust the argument using the pigeon hole principle in a way completely analogous to Zywina's
version for elliptic curves \cite{Zyw1}.
\end{rmk*}

To prove a similar result that holds for all $l$, requires stronger methods.  If we define
\[
 E_K(x)= \bigcup_{\forall l} E_{K,l}(x),
\]
\nomenclature{$E_K(x)$}{$C_K(x)\cap\bigcup_{\forall l} E_{K,l}(x)$} 
then we need the following property to hold:
\begin{prty}\label{prty4}
There exists $C_K(x)\subset B_K(x)$ and constant $\gamma$ s.t.
\begin{align*}
 &\lim_{x\to\infty} \frac{|B_K(x)\rcomp C_K(x)|}{|B_K(x)|}=0\text{ and,}\\
 C_K(x)\cap E_K(x) \subset \bigcup_{l\ll (\log x)^\gamma} E_{K,l}(x).
\end{align*}
\end{prty}

Now, here is the precise statement of our general result.
\begin{thm}\label{general-thm}
 Let $K$ be a number field, and let $V$ be a smooth geometrically connected affine variety over $K$, birationally equivalent to
 $\bb{P}_K^r$ for some $r\geq 1$. Let $(V_l\to V)_l$ be a family of coverings satisfying conditions \ref{cond:1}-\ref{cond:4} above, and
 such that properties \ref{prty1}-\ref{prty4} hold with the values $\beta_1,\beta_2,\beta_3,\gamma$. Then for sufficiently large $x$, we have
\[
 \frac{|E_K(x)|}{|B_K(x)|}\ll_{\vphi,K,r} \frac{|B_K(x)\rcomp C_K(x)|}{|B_K(x)|}+\frac{(\log x)^{(\beta_1+\beta_2+2)\gamma +1}}{x^\frac{1}{2}}.
\]
\end{thm}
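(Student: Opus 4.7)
The plan is to decompose $E_K(x)$ via $C_K(x)$ and then apply the effective Hilbert irreducibility lemma prime by prime over a logarithmically bounded range of $l$. I would begin from the trivial inclusion
\[
E_K(x)\subset \paren{B_K(x)\rcomp C_K(x)}\cup \paren{E_K(x)\cap C_K(x)},
\]
which already accounts for the first summand on the right-hand side of the target bound. For the second piece, Property \ref{prty4} furnishes
\[
E_K(x)\cap C_K(x)\subset \bigcup_{l\ll (\log x)^\gamma} E_{K,l}(x),
\]
so it suffices to estimate $\sum_{l\leq L}|E_{K,l}(x)|$ for $L\asymp (\log x)^\gamma$.

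For each individual $l\leq L$ I would invoke Lemma \ref{H-Irred-lma}. Combining its estimate with the polynomial bounds for $|G_l^g|$ and $|G_l^{g\#}|$ that follow from Property \ref{prty1} yields
\[
\frac{|E_{K,l}(x)|}{|B_K(x)|}\ll_{\vphi,K,r} \frac{l^{\beta_1+\beta_2+1}\log x}{x^{1/2}}.
\]
Summing over primes $l\leq L$ by the routine estimate $\sum_{l\leq L}l^{k}\ll L^{k+1}/\log L$ and substituting $L\asymp (\log x)^\gamma$ then produces precisely the desired $(\log x)^{\gamma(\beta_1+\beta_2+2)+1}/x^{1/2}$ term.

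The main obstacle is arranging that Lemma \ref{H-Irred-lma} applies \emph{uniformly} as $l$ ranges over all primes up to $(\log x)^\gamma$, since the lemma's own hypothesis that ``$x$ is sufficiently large'' is \emph{a priori} $l$-dependent. This dependence traces back to needing $\Cal{L}^*$ (the sieve support constructed in the proof of the lemma) to be non-empty after one discards those $\mfrak{p}$ at which $G_{l,\mfrak{p}}^g\ncong G_l^g$. Property \ref{prty3} is exactly what is needed here: it bounds this set of bad primes by $l^{\beta_3}$, polynomial in $l$, while Siegel--Walfisz furnishes $\gg x^{1/2}/(l\log x)$ prime ideals in $\Sig_K^1(Q;1,l)$ at $Q=x^{1/2}$. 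For $l\ll (\log x)^\gamma$, the latter dominates the former once $x$ exceeds an absolute threshold, so the lemma can indeed be applied with an implicit constant independent of $l$. Combining the contributions from $B_K(x)\rcomp C_K(x)$ and from the union of the $E_{K,l}(x)$ then yields the theorem.
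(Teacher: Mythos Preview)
Your proposal is correct and follows essentially the same approach as the paper's own proof: decompose via $C_K(x)$ using Property~\ref{prty4}, apply the effective Hilbert irreducibility estimate prime by prime with Property~\ref{prty1} to get $|E_{K,l}(x)|/|B_K(x)|\ll l^{\beta_1+\beta_2+1}(\log x)/x^{1/2}$, and sum over $l\ll(\log x)^\gamma$. One small refinement: in your uniformity discussion you attribute the $l$-dependence of ``sufficiently large $x$'' solely to the primes with $G_{l,\mfrak{p}}^g\ncong G_l^g$, but the definition of $\Cal{L}^*$ also excludes $\mfrak{p}$ with $\alpha_l\in\mfrak{p}$ and those with $N(\mfrak{p})\ll\max\{|G_l^g|^3|G_l^{g\#}|,|G_l^g|\}$, contributing $O(l^{\beta_3})$ and $O(l^{3\beta_1+\beta_2})$ excluded primes respectively --- the paper records the combined loss as $(\log x)^{\max\{3\beta_1+\beta_2,\beta_1,\beta_3\}\gamma}$, still dominated by Siegel--Walfisz, so your conclusion is unaffected.
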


\begin{proof}
The proof begins in the same way as it does for lemma \ref{H-Irred-lma}, except that we now need properties \ref{prty1}-\ref{prty4}
in order to get the estimate for $|\Cal{L}^*|$.  Specifically, by applying these properties to the definition of $\Cal{L}^*$,, we obtain
\[
 |\Sig_K^1(Q;1,l)|-|\Cal{L}^*|\ll (\log x)^{\max\{3\beta_1+\beta_2,\beta_1,\beta_3\}\gamma},
\]
where the implied constant is absolute.  Comparing this with the estimate that Siegel-Walfisz gives us for $|\Sig_K^1(Q;1,l)|$ shows that
this is indeed smaller for sufficiently large $x$.  Continuing with the rest of the proof of lemma \ref{H-Irred-lma}, we obtain
\prettyref{eq:13} just as before, except that what we mean by sufficiently large $x$ no longer depends on $l$. We can now rewrite it
purely in terms of $x$ and $l$ by using the bounds in property \ref{prty1}.  Specifically, we get
\begin{equation}\label{eq:5}
 \frac{|E_{K,l}(x)|}{|B_K(x)|}\ll_{\vphi,K,r}l^{\beta_1+\beta_2+1} \frac{\log x}{x^\frac{1}{2}}.
\end{equation}
Now by property \ref{prty4} we have
\[
 \frac{|E_K(x)|}{|B_K(x)|}\leq \frac{|B_K(x)\rcomp C_K(x)|}{|B_K(x)|} +\sum_{l\ll (\log x)^\gamma}\frac{|E_{K,l}(x)|}{|B_K(x)|},
\]
and by applying \prettyref{eq:5} to the summation we have
\[
\sum_{l\ll (\log x)^\gamma}\frac{|E_{K,l}(x)|}{|B_K(x)|}
\ll_{\vphi,K,r} \frac{(\log x)^{(\beta_1+\beta_2+2)\gamma+1}}{x^\frac{1}{2}}.
\]
The last two estimates together give us the result in the theorem.
\end{proof}

\section{Application to abelian varieties}\label{sec4}

We first recall some important facts about the Siegel moduli spaces $X_N$ of level $N$ (see Chap. IV sec. 6 in \cite{Faltings}).
Strictly speaking they are locally noetherian separated algebraic stacks over $\spec\bb{Z}[\frac{1}{N}]$.  For $M|N$, the natural
morphisms $X_N\to X_M$ are finite \'etale over $\spec \bb{Z}[\zeta_N,\frac{1}{N}]$, and by taking the toroidal compactification
we obtain geometric irreducibility over $\spec\bb{Z}[\zeta_N,\frac{1}{N}]$ (6.8 Corollary 1 in \cite{Faltings}).  Considering the
same picture over $\spec\bb{Z}[\frac{1}{N}]$ is possible, but geometric irreducibility no longer holds.  This is also explained
in Sections 1.4 and 1.10 of \cite{Katz1} for the special case of elliptic curves.

Now, let $K$ be a number field, let $V$ be a smooth geometrically irreducible affine variety over $K$ of dimension $r\geq 1$,
birationally equivalent to $\bb{P}_K^r$ via the rational map $\vphi:V\to\bb{P}_K^r$, and let $x_1,\ldots x_r$ be a generating
set of global functions on $V$.  Let $K(V)$ be the function field of $V$, and let $A$ be a principally polarized abelian variety
over $K(V)$ of dimension $g$, that is non-constant for any of the global sections $x_1,\ldots x_r$. We can spread out $A$ to an 
abelian scheme over $U$, where $U$ is an open subset of $V$.  For each $t\in U$, $A_t$ corresponds to a point in $X=X_1$,
the Siegel moduli space of level 1.  This defines a map $\psi:V\to X$, which is dominant.  We then define $V_l=V\times_X X_l$,
so that we get the following commutative diagram.
\[
\xymatrix{V_l\ar[d]\ar[r] & X_l\ar[d]\\ V \ar[r]^\psi& X}
\]
The map $V_l\to V$ itself may not be finite \'etale, but we can get a finite \'etale map by restricting to $U$.  Using the theory
of stacks this is accomplished by the fact that $X_n\to X$ is relatively representable (see Chapter I section 4 of \cite{Faltings}
for the definition).  Using schemes, it must be shown that $V_l\to V$ is generically \'etale, not only in characteristic zero,
but also after reduction mod $\mfrak{p}$.  The stacks $X_n$ can only be conisdered as schemes for $n\geq 3$.  However, using the
method in \cite{Katz1} for constructing a scheme for the level 2 structure, we replace $X_n$ by the quotient of $X_{3n}$ by
the subgroup of matrices congruent to the identity mod 3.  For $n=1$ this quotient will replace $X$ and is an irreducible scheme.
Since $V$ and $X$ are irreducible and $V\to X$ is dominant, we get an extension of function fields, and a field theory argument
can be used to show that $V_l\to V$ is generically \'etale.

It is worthwile to consider an example, in the elliptic curve case. If $E$ is the elliptic curve
\[
y^2=x^3+3(1-t)tx+2(1-t)^2t
\]
defined over the function field $\bb{Q}(t)$, it can be spread out to an elliptic curve over the open subset of $\spec\bb{Z}[t]$
with $6t(1-t)$ inverted.  Since $j(t)=-12^3t$, this corresponds to the open subset of the $j$-line with $j$ and $1728-j$ inverted.
The scheme $V_l$ constructed as above, is then finite \'etale over $\spec\bb{Z}[\frac{1}{6lt(1-t)},t]$.  This means that the family
of coverings $V_l\to V$ will satisfy conditions \ref{cond:1}-\ref{cond:4} with $\alpha_l=6l$ and $g(t)=t(1-t)$.  Note
that $g$ does not depend on $l$.  The Galois groups in this case are $G_l=\GL{2}{l}/\ang{\pm 1}$ and $G_l^g=\SL{2}{l}/\ang{\pm 1}$,
and more generally for abelian varieties of dimension $g$ we will have $G_l=\GSp{2g}{l}/\ang{\pm 1}$ and $G_l^g=\Sp{2g}{l}/\ang{\pm 1}$,
at least so long as $K$ and $\bb{Q}(\zeta_l)$ are linearly disjoint.

In order to get geometric irreducibility to hold for the coverings $V_l \to V$ we make the assumption that $\psi:V\to X$ is a
dominant map with degree 1.  This is not possible for all Siegel moduli spaces.  It is known that the level 1 moduli spaces of
principally polarized abelian varieties are unirational when $g\leq 5$, that they are of general type when $g\geq 7$, and in the
case $g=6$  it seems that the question of unirationality is still open (see \cite{Mum}). It is also known that when the level is
$\geq 4$, moduli spaces of principally polarized abelian surfaces are of general type (see \cite{wang}).  In the case of abelian
surfaces it is known that the level 1 moduli space of polarized abelian surfaces is rational or unirational if the degree of
polarization is 1,2,3,4,5,7, or 9 (see \cite{Grit}), in particular for the principally polarized case it is actually rational
(see \cite{Igu2}). In the case of elliptic curves the analogous result is classical.

Let $G_K=\Gal(\wbar{K}/K)$, fix $t\in V(K)$ and consider the representation $\rho_{l,t}:G_K\to \text{GL}(A_t[l])$, where $A_t[l]$ denotes the $l$-torsion of $A_t$.
This representation induces a representation $\rho_{l,t}:G_K\to G_l$.  However, whereas $\text{GL}(A_t[l])\cong \GSp{2g}{l}$, we have $G_l\cong\GSp{2g}{l}/\ang{\pm 1}$
as noted above.  As a consequence of this, it is easy to pass results from $\text{GL}(A_t[l])$ to $G_l$, but to go backwards we will need the following lemma.
\begin{lma}\label{lmaC}
Given the exact sequence,
\begin{equation}\label{eq:ses1}
\xymatrix{1\ar[r] & \ang{\pm 1}\ar[r] &\GSp{2g}{l}\ar[r]^{\vphi\quad} &\GSp{2g}{l}/\ang{\pm1}\ar[r] &1}
\end{equation}
let $G$ be a subgroup of $\GSp{2g}{l}$ s.t. $\vphi|_G$ is surjective. Then $G=\GSp{2g}{l}$.
\end{lma}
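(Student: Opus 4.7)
The plan is to reduce the lemma to the single claim that $-I\in[\GSp{2g}{l},\GSp{2g}{l}]$, and then to exhibit $-I$ as an explicit commutator of two elements of $\GSp{2g}{l}$.

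First I would observe that surjectivity of $\vphi|_G$ forces $G\cdot\ang{\pm 1}=\GSp{2g}{l}$. By the second isomorphism theorem,
\[
 [\GSp{2g}{l}:G]=\frac{|\ang{\pm 1}|}{|G\cap\ang{\pm 1}|}\in\{1,2\}.
\]
If the index is $1$ we are done, and the case $l=2$ is trivial since then $-1=1$. Otherwise $G$ is a normal subgroup of index $2$, and therefore contains the commutator subgroup $[\GSp{2g}{l},\GSp{2g}{l}]$. It follows that it suffices to show that $-I$ is a commutator in $\GSp{2g}{l}$, for then $-I\in G$, contradicting the case assumption $-I\notin G$.

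Let $J=\left(\begin{smallmatrix}0 & I_g\\ -I_g & 0\end{smallmatrix}\right)$ denote the standard symplectic form, and take the block matrices
\[
 C=\begin{pmatrix}0 & I_g\\ -I_g & 0\end{pmatrix},\qquad g_0=\begin{pmatrix}I_g & 0\\ 0 & -I_g\end{pmatrix}.
\]
A routine calculation gives $C^TJC=J$ and $g_0^TJg_0=-J$, so both $C$ and $g_0$ lie in $\GSp{2g}{l}$ (the latter with similitude $-1$). Moreover $C^2=-I$, so $C^{-1}=-C$, and conjugation by $g_0$ satisfies $g_0Cg_0^{-1}=-C$. Hence
\[
 [g_0,C]=(g_0Cg_0^{-1})\cdot C^{-1}=(-C)(-C)=C^2=-I,
\]
which places $-I$ in $[\GSp{2g}{l},\GSp{2g}{l}]\subset G$ and contradicts $-I\notin G$.

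The only point requiring verification is the short symplectic-similitude computation for the two explicit block matrices; there is no genuine obstacle, and the argument otherwise reduces to bookkeeping with the second isomorphism theorem. The conceptual content is simply that $-I$, being central with similitude $1$, sits in the commutator subgroup of $\GSp{2g}{l}$, and is thus forced into any subgroup of index at most $2$.
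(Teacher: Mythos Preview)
Your proof is correct, but the paper takes a shorter path. Both arguments reduce to showing $-I\in G$, and both use the matrix $C=\left(\begin{smallmatrix}0&I_g\\-I_g&0\end{smallmatrix}\right)$ with $C^2=-I$. The difference is that the paper observes directly that surjectivity of $\vphi|_G$ means either $C$ or $-C$ lies in $G$, and squaring either one yields $-I\in G$; there is no need to invoke the commutator subgroup or the auxiliary matrix $g_0$. Your route through ``index $2$ $\Rightarrow$ contains the derived subgroup $\Rightarrow$ exhibit $-I$ as a commutator'' is perfectly valid and arguably more conceptual (it isolates the group-theoretic principle that a central involution lying in the derived subgroup is trapped in every index-$2$ subgroup), but it is one step longer than necessary here.
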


\begin{proof}
It suffices to show that $-1\in G$. If $x\in \GSp{2g}{l}$, then $x$ or $-x\in G$.  In particular
\[
 \begin{pmatrix}
  &-I_g\\
I_g&
 \end{pmatrix}
\quad\text{or}\quad
\begin{pmatrix}
  &I_g\\
-I_g&
\end{pmatrix}
\]
is in $G$.  But by squaring both of these matrices, it follows that $-1\in G$.
\end{proof}

It is known that CM elliptic curves cannot have surjective Galois representations for all $l$, and for abelian
varieties it is true more generally if the endomorphism ring is bigger than $\bb{Z}$.  Hence, we define
\begin{equation}\label{eq:12}
 C_K(x)=\{t\in B_K(x):\End_{\wbar{K}}(A_t)=\bb{Z}\},
\end{equation}
and wish to show that the number of points removed from $B_K(x)$ to get $C_K(x)$ is small. It would be nice to say that we
simply need to remove the PEL varieties of lower dimension from the full Siegel moduli space, however, in general it is
not clear that we have a finite list of possible endomorphism rings, once $K$ has been fixed.

So instead, we fix $l$, and apply the Hilbert irreducibility lemma \ref{H-Irred-lma} to obtain a subset of $B_K(x)$,
for which $\rho_{l,t}(G_K)$ is all of $G_l$. We then apply lemma \ref{lmaC} to lift to $\GSp{2g}{l}$, and a result of
Vasiu \cite{Vasiu} to lift to $\GSp{2g}{\bb{Z}_l}$. This says that the number of $t\in B_K(x)$ for which the Galois
representation on the Tate module associated to $A_t$ is not surjective is bounded by a constant times
$|B_K(x)|\frac{\log x}{x^\frac{1}{2}}$. Faltings \cite{Faltings2} has shown that
\[
 \End_K(A_t)\otimes \bb{Z}_l\cong \End_{G_K}(T_lA_t)
\]
which means that if $\End_K(A_t)\neq\bb{Z}$, then $\End_{G_K}(T_lA_t)$ contains a non-central element, and so the Galois representation
of $G_K$ on $T_lA_t$ cannot be surjective.  It follows that
\begin{equation}\label{eq:16}
  \frac{|B_K(x)\rcomp C_K(x)|}{|B_K(x)|}\ll\frac{\log x}{x^\frac{1}{2}}.
\end{equation}

To show that property \ref{prty4} holds using this data, we use the theorem of Masser and W\"{u}sthol
\cite{MW} in the case $g=1$, and a generalization of it
by Kawamura \cite{Kawa} in the case $g=2$.

\begin{thm}[Kawamura]\label{thm-Kawamura}
Let $A$ be a principally polarized abelian surface over a number field of degree $d$ with $\End_{\wbar{K}}(A)\cong\bb{Z}$. 
Let $D(K)$ be the discriminant of $K$,
and $h(A)$ be the Faltings height of $A$. Then there exist constants $c,\gamma$, such that for any prime $l$ satisfying
\[
 l>\max\{D(K), c(\max\{d,h(A)\})^\gamma\},
\]
we have $\rho_l(G_K)=\GSp{4}{l}$
\end{thm}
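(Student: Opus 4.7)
The natural plan is to follow the strategy used by Masser and W\"usthol for elliptic curves and adapt it to the rank-two symplectic setting. The argument splits into a group-theoretic input (classifying what image subgroups can look like) and an Arakelov/isogeny input (showing that non-generic images force extra geometric structure on $A$, contradicting $\End_{\wbar{K}}(A)\cong\bb{Z}$ once $l$ exceeds a polynomial bound in $d$ and $h(A)$).

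First I would invoke the classification of maximal subgroups of $\GSp{4}{l}$ (via Aschbacher's theorem, in the form of Kleidman--Liebeck, and the classical work of Mitchell on subgroups of $\Sp{4}{l}$). For $l$ larger than an absolute constant, every proper subgroup $H\subset\GSp{4}{l}$ surjecting onto the similitude character lies in one of finitely many geometric families: (C$_1$) stabilisers of an isotropic or a non-degenerate subspace of $\bb{F}_l^4$; (C$_2$) stabilisers of a direct-sum decomposition into two Lagrangians, or a pair of non-degenerate planes; (C$_3$) normalisers of an $\bb{F}_{l^2}$-structure, essentially $\GL{2}{l^2}$ or $\GU{2}{l}$; (C$_6$) normalisers of an extraspecial $2$-group; and (S) almost simple subgroups of bounded order. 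The key bookkeeping is that each such family has a uniform description as the mod-$l$ reduction of a group scheme.

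Second, for each Aschbacher class I would show that $\rho_{l,t}(G_K)\subset H$ forces extra geometric structure on $A$ detectable on the $l$-torsion. A stabilised isotropic line yields an isogeny factor of dimension $1$ and hence a non-trivial elliptic isogeny decomposition; a stabilised non-degenerate plane yields a decomposition into two elliptic curves; the (C$_3$) case yields real or complex multiplication by an order in a quadratic field; and (C$_6$) and (S) cases yield that $A$ is isogenous to a variety whose $l$-adic Galois image is abnormally small. In each case, Faltings' isogeny theorem $\End_K(A)\otimes \bb{Z}_l \cong \End_{G_K}(T_l A)$ lifts the mod-$l$ obstruction to an actual endomorphism or a proper isogeny, producing a variety $A'$ isogenous to $A$ with $\End_{\wbar{K}}(A')\supsetneq\bb{Z}$, contradicting the hypothesis \emph{provided} the isogeny can be realised with a controllable degree.

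The main obstacle, and the step that determines the constants $c,\gamma$, is making this last implication effective: one needs an isogeny theorem giving that any $G_K$-stable lattice structure at level $l$ corresponds to an isogeny $A\to A'$ of degree bounded by a power of $l$ times $(\max\{d,h(A)\})^{O(1)}$. For (C$_1$)--(C$_3$) this uses the Masser--W\"usthol isogeny estimates for $g=2$; the exceptional class (S) is the most delicate, since there is no direct geometric interpretation, and one instead uses that any almost simple subgroup of $\GSp{4}{l}$ has order bounded independently of $l$, so the image would factor through a finite quotient of $\pi_1$, forcing $A$ to be isotrivial after a controlled base change, again contradicting $\End_{\wbar{K}}(A)\cong\bb{Z}$ once $l$ dominates the ramification. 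Combining all classes and taking $c,\gamma$ to be the maximum of the exponents appearing then gives the stated bound.
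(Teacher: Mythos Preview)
The paper does not give its own proof of this statement: it is quoted from Kawamura, and the paper only records two repairs to Kawamura's argument --- replace the Kleidman--Liebeck Main Theorem (which needs dimension $>12$) by Aschbacher's theorem, and observe that the Aschbacher class $\Cal{S}$ is not in fact empty but can be handled by the same method Kawamura already uses for the extraspecial normaliser $2^{1+4}.\On{4}{-}{2}$. Your outline --- Aschbacher classification of maximal subgroups of $\GSp{4}{l}$, a case-by-case reduction of each geometric class to an isogeny producing extra endomorphisms, and the Masser--W\"ustholz effective isogeny bounds to control the degree --- is precisely Kawamura's strategy, so at the architectural level you are aligned with the cited source and with the paper's corrections.

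Your handling of class $\Cal{S}$, however, contains a real error. You assert that every almost simple subgroup of $\GSp{4}{l}$ in class $\Cal{S}$ has order bounded independently of $l$. This is false: for $l\geq 7$ the image of $\SL{2}{l}$ under the symmetric cube of its standard representation sits in $\Sp{4}{l}$ as an $\Cal{S}$-class subgroup, and its order is $\asymp l^3$. The genuinely bounded-order members of $\Cal{S}$ (e.g.\ $2.\ASym{6}$, $2.\ASym{7}$) can be disposed of via the similitude character, which is the mod-$l$ cyclotomic character and therefore forces the image to have size at least $(l-1)/[K:\bb{Q}]$; but this says nothing about the $\SL{2}{l}$ case. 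Your fallback (``the image would factor through a finite quotient of $\pi_1$, forcing $A$ to be isotrivial after a controlled base change'') also misfires: in this theorem $A$ is a single abelian surface over the number field $K$, not a family over a base variety, so there is no geometric $\pi_1$ in play and no notion of isotriviality. You are conflating the setting of this statement with the family setting used elsewhere in the paper. The $\mathrm{Sym}^3$ case must be treated on its own terms, and you should check exactly what Kawamura does for $2^{1+4}.\On{4}{-}{2}$ before asserting that the method transfers.
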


Kawamura tries to use the Main Theorem in \cite{KL}, however, that theorem applies only when the dimension is $> 12$.
This condition is only needed for maximality, and so by not making any reference to maximality, Aschbacher's theorem  can be
applied instead (see Theorem 1.2.1 in \cite{KL}). There is another mistake in Kawamura's proof, specifically he claims that
tables \linebreak 3.5.A-H in \cite{KL} indicate that $\Cal{S}$ is empty, which is not true. This is also not a huge problem because the
groups in $\Cal{S}$ can be dealt with in the same way that he deals with $2^{1+4}.\On{4}{-}{2}$. With these minor changes,
the proof of Theorem \ref{thm-Kawamura} is valid.

\begin{lma}\label{lmaD}
 Property \ref{prty4} holds for the groups $G_l$ for $C_K(x)$ as defined by equation \prettyref{eq:12} in the cases $g=1$ and $2$.
\end{lma}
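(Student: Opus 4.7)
The plan is to verify both clauses of Property \ref{prty4} in turn. The first clause, $|B_K(x)\rcomp C_K(x)|/|B_K(x)|\to 0$, has already been established by \prettyref{eq:16}, which in fact delivers the stronger rate $O((\log x)/x^{1/2})$. Hence only the inclusion
\[
C_K(x)\cap E_K(x)\subset \bigcup_{l\ll (\log x)^\gamma} E_{K,l}(x)
\]
for some $\gamma$ remains to be proved.

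Fix $t\in C_K(x)\cap E_K(x)$ and pick a prime $l$ with $t\in E_{K,l}(x)$. By the remark following Lemma \ref{H-Irred-lma}, the membership $t\in E_{K,l}(x)$ is equivalent to $\rho_{l,t}(G_K)\subsetneq G_l$, where $G_l\cong \GSp{2g}{l}/\ang{\pm 1}$ in dimensions $g=1,2$. Surjectivity of the natural representation $G_K\to\GSp{2g}{l}$ on $A_t[l]$ clearly passes to the quotient, and Lemma \ref{lmaC} is exactly the converse; therefore the natural representation is itself not surjective onto $\GSp{2g}{l}$. Since $t\in C_K(x)$ means $\End_{\wbar{K}}(A_t)=\bb{Z}$, Theorem \ref{thm-Kawamura} for $g=2$ and the theorem of Masser and W\"{u}sthol for $g=1$ then supply absolute constants $c,\gamma$ such that
\[
l\leq \max\{D(K),\; c(\max\{d,h(A_t)\})^\gamma\},
\]
where $d=[K:\bb{Q}]$ and $h(A_t)$ denotes the Faltings height of $A_t$.

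The final step is to bound $h(A_t)$ uniformly for $t\in B_K(x)$. Since the family $A/K(V)$ is fixed and extends to an abelian scheme over an open subset of $V$, and since the classifying map $\psi:V\to X$ factors the construction of each $A_t$, functoriality of the Faltings height together with a modular (theta-null) embedding of $X$ into a projective space produces an estimate of the form $h(A_t)\ll_{K,V,A} \log H_\vphi(t)+1\ll \log x$. Because $d$ and $D(K)$ are constants of the setup, inserting this bound and absorbing into the implied constant yields $l\ll(\log x)^\gamma$, completing the verification.

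The principal obstacle is securing the height comparison $h(A_t)\ll \log x$ with implied constants uniform over $t\in B_K(x)$. While Faltings' height is in principle commensurable with a Weil height on $X$ arising from Siegel modular forms, uniformity here demands care at points near the indeterminacy locus of $\psi$ and at primes of bad reduction of $A_t$. Spreading $A$ out explicitly over an open subscheme of $V$ on which the abelian scheme structure is defined, and invoking Silverman-type estimates relating fiberwise Faltings heights to heights on the base, should render this uniformity manifest.
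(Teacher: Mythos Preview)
Your argument is correct and follows essentially the same route as the paper: reduce non-surjectivity onto $G_l$ to non-surjectivity onto $\GSp{2g}{l}$, apply Masser--W\"ustholz/Kawamura to bound $l$ in terms of $h(A_t)$, and then compare $h(A_t)$ with $\log H_\vphi(t)\leq \log x$. The only notable difference is in tone at the last step: the paper dispatches the height comparison in one line by citing standard height machinery (\cite{Silv}) to get $h(A_t)=h(t)+O(1)$ with $h(t)=\log H(t)$, whereas you flag it as the ``principal obstacle'' and sketch a route through theta-null embeddings and Silverman-type specialization estimates---this is the right circle of ideas, but you can simply invoke it as known.
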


\begin{proof}
Let $t\in C_K(x)$ be a point for which $\rho_{l,t}:G_K\to G_l$ is not surjective, so that $E_{K,l}$ is non-empty.
Then the representation $\rho_{l,t}:G_K\to \text{GL}(A_t[l])$ is also not surjective, and so by Kawamura's theorem
there exist constants $c,\gamma$ such that
\[
 l\leq \max\{D(K), c(\max\{d,h(A_t)\})^\gamma\} \ll h(A_t)^\gamma.
\]
It is well known (see \cite{Silv} for example)from the theory of heights that $h(A_t)=h(t)+O(1)$, where the height on the right is the
absolute logarithmic height on $\bb{P}_{\wbar{\bb{Q}}}^r$, i.e. $h(t)=\log H(t)$.
The theory of height also gives us $H_\vphi(t)= H(t)+O(1)$, so it follows that
\[
 l \ll (\log H_\vphi(t))^\gamma\leq (\log x)^\gamma,
\]
and therefore $C_K(x)\cap E_K(x)$ will be contained in the finite union $\bigcup_{l\ll (\log x)^\gamma }E_{K,l}$.
\end{proof}

We are now in a position to prove the following theorem
\begin{thm}
Let $K$ be a number field, let $V$ be smooth geometrically irreducible affine variety over $K$ of dimension $r=\binom{g+1}{2}$,
birationally equivalent to $\bb{P}_K^r$ via the rational map $\vphi:V\to\bb{P}_K^r$.  Let $K(V)$ be the function field of $V$,
and let $A$ be a principally polarized abelian variety over $K(V)$ of dimension $g=1$ or 2.
\[
 E'_K(x)=\{t\in B_K(x): \rho_{l,t}(G_K)\subsetneq \GSp{2g}{l}\text{ for some $l$}\}.
\]
Then
\[
 \lim_{x\to \infty}\frac{|E'_K(x)|}{|B_K(x)|}=0.
\]
\end{thm}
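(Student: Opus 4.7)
The plan is to apply Theorem~\ref{general-thm} to the family of coverings $V_l = V\times_X X_l \to V$ constructed above via pullback from the Siegel modular tower, and then translate the conclusion from the image in $G_l = \GSp{2g}{l}/\langle\pm 1\rangle$ to the image in $\GSp{2g}{l}$ via Lemma~\ref{lmaC}. The work breaks into three pieces: (i) verifying conditions \ref{cond:1}--\ref{cond:4} for the family $V_l\to V$; (ii) verifying properties \ref{prty1}--\ref{prty4}; (iii) upgrading surjectivity onto $G_l$ to surjectivity onto $\GSp{2g}{l}$.

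First I would verify the structural conditions. Conditions \ref{cond:1} and \ref{cond:2} are exactly what was discussed when $V_l$ was defined (working with the level $3l$ quotient to pass from a stack to a scheme and taking $g$ to be the polynomial cutting out the discriminant locus of $\psi$). Condition \ref{cond:3} is immediate: the similitude character identifies $G_l/G_l^g$ with a subgroup of $(\GSp{2g}{l}/\Sp{2g}{l})\cong\bb{F}_l^\times$. Condition \ref{cond:4} is the geometric irreducibility of $X_l$ over $\spec\bb{Z}[\zeta_l,1/l]$ recalled at the start of this section, combined with the assumption that $\psi : V\to X$ is dominant of degree one.

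Next I would check the quantitative properties. Property~\ref{prty1} holds with $\beta_1 = 2g^2+g+1$ from the formula $|\GSp{2g}{l}|\asymp l^{2g^2+g+1}$, and with some $\beta_2<\beta_1$ for the conjugacy class count, which is well known to be polynomial in $l$. Property~\ref{prty3} follows because primes $\mfrak{p}$ where $G_{l,\mfrak{p}}^g\ncong G_l^g$ occur only when $\mfrak{p}$ lies over a rational prime dividing $l\alpha$ or where the ramification divisor of $X_l\to X$ specializes badly, which gives $\beta_3=1$ by the ``spreading out'' already invoked via Grothendieck 9.7.7 of \cite{EGA4_3}. Property~\ref{prty4} is Lemma~\ref{lmaD} together with the bound \prettyref{eq:16}, using Kawamura's Theorem~\ref{thm-Kawamura} in dimension 2 and Masser--W\"usthol in dimension 1; here the exponent $\gamma$ comes from the effective bound on the smallest good prime.

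With all conditions and properties verified, Theorem~\ref{general-thm} gives
\[
\frac{|E_K(x)|}{|B_K(x)|}\ \ll_{\vphi,K,r}\ \frac{\log x}{x^{1/2}}\ +\ \frac{(\log x)^{(\beta_1+\beta_2+2)\gamma+1}}{x^{1/2}}\longrightarrow 0,
\]
where $E_K(x)$ is the exceptional set measured in $G_l$. To conclude the theorem as stated, I would observe that $E'_K(x)\subset E_K(x)$: if $\rho_{l,t}(G_K)=G_l$ then by Lemma~\ref{lmaC} the lifted representation $G_K\to \GSp{2g}{l}$ is already surjective, so the complement $B_K(x)\setminus E'_K(x)$ contains $B_K(x)\setminus E_K(x)$. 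The main obstacle I anticipate is property~\ref{prty3}: the $l$-uniform control of the bad primes requires an honest analysis of the ramification divisor of $X_l\to X$ (rather than just the soft statement of \cite{EGA4_3}), since the bound must grow at most polynomially in $l$ for the general theorem to be applicable.
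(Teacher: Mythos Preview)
Your proposal is correct and follows essentially the same route as the paper: verify conditions \ref{cond:1}--\ref{cond:4} and properties \ref{prty1}--\ref{prty4} for the pulled-back covers $V_l\to V$, apply Theorem~\ref{general-thm}, and then use Lemma~\ref{lmaC} to pass from $G_l$ to $\GSp{2g}{l}$.

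One refinement: the ``obstacle'' you flag for Property~\ref{prty3} is not really an obstacle, and the paper resolves it with an ingredient you have already invoked. The geometric irreducibility of $X_l$ over $\spec\bb{Z}[\zeta_l,\tfrac{1}{l}]$ (6.8 Corollary~1 in \cite{Faltings}), which you used for condition~\ref{cond:4}, is precisely what gives $G_{l,\mfrak{p}}^g\cong G_l^g$ for every $\mfrak{p}$ splitting in $K(\zeta_l)$; since $\Cal{L}^*\subset\Sig_K^1(Q;1,l)$ consists only of such primes, the exceptional set in Property~\ref{prty3} is bounded by the primes dividing $l$ plus the finitely many (independent of $l$) primes where $V$ itself fails to be geometrically irreducible. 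This yields $\beta_3=1$ directly, with no further analysis of the ramification divisor needed. The paper also pins down $\beta_2=g+1$ via \cite{Lieb}, giving the explicit exponent $2(g^2+g+2)\gamma+1$ in the final bound.
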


\begin{proof}
In the cases $g=1$ and $2$, we know by lemma~\ref{lmaD} that property~\ref{prty4} holds with $C_K(x)$ as defined by equation~\prettyref{eq:12},
and that $X$ is a rational variety, hence a degree 1 map $\psi:V\to X$ exists. Therefore
\begin{align*}
 G_l^g &\cong\Sp{2g}{l}/\ang{\pm 1}\text{ for all $l$, and }\\
   G_l &\cong\GSp{2g}{l}/\ang{\pm 1}\text{ for all $l$ such that $K$ and $\bb{Q}(\zeta_l)$ are linearly disjoint.}
\end{align*}
By 6.8 corollary 1 in \cite{Faltings}, it is known that the level $l$ Siegel moduli spaces have irreducible geometric fibers over $\spec\bb{Z}[\zeta_l,\frac{1}{l}]$,
and this is sufficient to obtain $G_{l,\mfrak{p}}^g\cong G_l^g$ for primes that split over $K(\zeta_l)$. We have already pointed out that geometric irreducibility
of $V$ can be handled with theorem 9.7.7 of \cite{EGA4_3}. Taken together it follows that property~\ref{prty3} applies with $\beta_3=1$.

The discussion at the beginning of this section shows that if $U$ is the open subset of $V$ on which the
spreading out of $A$ is defined, then $f_l$ only needs to cover the complement of the intersection of $U$ and the domain of $\psi$.

For arbitrary dimension $g$ the order formulas in \cite{KL} together with the upper bounds for $|G_l^\#|$ in \cite{Lieb} (see also \cite{Kow1}),
show that property~\ref{prty1} holds with $\beta_1=2g^2+g+1$ and $\beta_2=g+1$, so by applying theorem~\ref{general-thm}
with these values, we obtain
\begin{equation}\label{eq:10}
 \frac{|E_K(x)|}{|B_K(x)|}\ll_{\vphi,K,r}  \frac{|B_K(x)\rcomp C_K(x)|}{|B_K(x)|} + \frac{(\log x)^{2(g^2+g+2)\gamma+1}}{x^\frac{1}{2}}.
\end{equation}
In particular if $g=1$ or $g=2$ the factor in front of $\gamma$ is 8 or 16 respectively, and the estimate given by \prettyref{eq:16} shows that
the first term can be dropped completely.

If we view $\rho_{l,t}$ as a representation on the $l$-torsion of $A_t$ then $\rho_{l,t}(G_K)\subset \GSp{4}{l}$.
But if we consider the induced map $\rho_{l,t}:G_K\to G_l$, then $\rho_{l,t}(G_K)\subset \GSp{4}{l}/\ang{\pm1}$.
The two images are related by the exact sequence \prettyref{eq:ses1}, hence $t\in E'_K(x)$ implies that $t\in E_K(x)$
by Lemma \ref{lmaC}. It follows that we can replace $E_K(x)$ with $E'_K(x)$ in the estimates above, and so taking
the limit $x\to \infty$ proves the theorem.
\end{proof}

\appendix
\section{List of notation}\label{notation}
\printnomenclature[2cm]

\bibliography{Larsen-Wallace}{}
\bibliographystyle{plain}
\end{document}